\newtheorem{theorem}{Theorem}[section]
\newtheorem{proposition}[theorem]{Proposition}%
\newtheorem{lemma}[theorem]{Lemma}%
\newtheorem{assumption}{Assumption}
\newtheorem{definition}{Definition}%
\DeclareMathOperator*{\diag}{diag}
\def\R{\mathbb{R}}
\def\N{\mathcal{N}}
\def\K{\mathcal{K}}
\def\dist{\textup{dist}}
\def\posto{\textup{rank}}
\def\0{\textbf{\textup{0}}}
\newcommand{\norm}[1]{\|#1\|_{2}} 
\begin{document}

\title{Levenberg-Marquardt method with Singular Scaling and applications}

\author[1]{Everton Boos\footnote{everton.boos@ufsc.br}}
\author[1]{Douglas S. Gon\c{c}alves\footnote{douglas.goncalves@ufsc.br}}
\author[1]{Ferm\'{i}n S. V. Baz\'{a}n\footnote{fermin.bazan@ufsc.br}}

\affil[1]{Department of Mathematics, Federal University of Santa Catarina, Florian\'{o}polis, 88040900, SC, Brazil}

\maketitle

\begin{abstract}
Inspired by certain regularization techniques for linear inverse problems, in this work we investigate the convergence properties of the Levenberg-Marquardt method using singular scaling matrices. Under a completeness condition, we show that the method is well-defined and establish its local quadratic convergence under an error bound assumption. We also prove that the search directions are gradient-related allowing us to show that limit points of the sequence generated by a line-search version of the method are stationary for the sum-of-squares function. 
The usefulness of the method is illustrated with some examples of parameter identification in heat conduction problems
for which specific singular scaling matrices can be used to improve the quality of approximate solutions.
\end{abstract}

\textbf{Keywords.} Levenberg-Marquardt, Singular Scaling Matrix, Convergence analysis, Parameter identification

\section{Introduction}\label{sec:intro}
In this work we consider the following nonlinear least-squares problem 
\begin{equation}\label{problem}
\min_{x \in \R^n} \frac{1}{2}\|F(x)\|_2^2 =: \phi (x),
\end{equation}
where $F: \R^n \rightarrow \R^m$ is continuously differentiable. Throughout this paper we assume that the optimal value for \eqref{problem} is zero, i.e. the set 
\begin{equation*}
X^* := \{ x \in \R^n \mid F(x) = \0 \}
\end{equation*}
is non-empty. 

We shall provide local and global convergence analysis for the following Levenberg-Marquardt method (LMM)
\begin{align}\label{s1}
(J_k^TJ_k + \lambda_k L^TL) d_k &= -J_k^TF_k\\
x_{k+1} &= x_k + \alpha_k d_k, \quad \forall k \geq 0, \label{s2}
\end{align}
where $F_k:= F(x_k)$, $J_k:= J(x_k)$ is the Jacobian of $F$ at $x_k$, $\{\lambda_k \}$ is a 
sequence of positive scalars and $\{\alpha_k \}$ corresponds to a step-size sequence. 

In the classic LMM, instead of $L^T L$  a positive definite matrix $D_k$ 
called \emph{scaling matrix} is usually chosen. 
Convergence theory for this case and, in particular,  for the choice $D_k = I_n$, the identity matrix,
is well-known in the literature and local quadratic convergence
can be shown under mild assumptions
\cite{Dennis_Schnabel, Dennis1977, Yamashita, More78, Levenberg1944, Marquardt1963, Bergou2020, Behling2019}. 

In this work however, motivated by applications in inverse problems, we choose the scaling matrix in the form
$D_k = L^T L$ and allow it to be \emph{singular}.  
We shall refer to the iteration \eqref{s1}--\eqref{s2} as Levenberg-Marquardt method with Singular
Scaling (LMMSS).

Several authors over  the decades have proposed different choices for the scaling matrix, with
$D_k$  non-singular, showing advantages on the generated approximations made by LMM. 
Emblematic examples generally  take $D_k$ as diagonal \cite{Fletcher71}, for instance, 
Moré \cite{More78} introduces information from  the Jacobian $J_k$ on this   diagonal in the iterative process, aiming to help  
convergence and improve the conditioning of (\ref{s1}). Other definitions of $D_k$, which vary according to the context
of the problem,  may be  found in \cite{Dennis1977, Zhou98, Schwetlick89, Fan2003, Transtrum2012, Nowak2004, Osborne76}. 

On the other hand, our motivation to use singular matrices 
in the form $L^TL$ comes from the general form Tikhonov regularization
\cite{Tikhonov95}
for linear systems $Ax = b$, i.e.,
\begin{equation*}
	x_\lambda = (A^TA + \lambda L^TL)^{-1} A^Tb.
\end{equation*}
In this case, the literature indicates that for discrete ill-posed problems with smooth solution, the choice of
$L$ as a discrete version of derivative operators can lead to significant improvements in the generated approximations \cite{Hansen1998, Hansen2010}. 

Hence, in this paper the use of a singular, and constant, $D_k = L^T L$ is not directly related to improving the conditioning of LMM subproblem or speeding-up convergence, but rather to allow us to introduce desired properties, e.g. smoothness, in the solution obtained by LMM in certain inverse problems. 
Our aim is to show that convergence of LMM with such singular scaling matrices can still be achieved under reasonable assumptions and that the use of certain problem-oriented singular scaling matrices has an impact in the quality of the obtained approximate solutions.

As far as we know, there are no works in the literature treating local and global convergence of LMM with singular scaling matrices. 
Although the roadmap of the convergence analysis follows closely the seminal work \cite{Yamashita}, there are new challenges imposed by the singularity of the scaling matrices in the proof of key lemmas that demands other theoretical tools and additional assumptions.


This paper is organized as follows. In Section~\ref{sec:pre}, we start by presenting the mathematical background and preliminary results necessary to the upcoming sections. 
Under a completeness condition we show that the LMMSS iteration is well-defined and prove its local quadratic convergence under an error bound condition in Section~\ref{sec:local}. 
Next, we prove in Section~\ref{sec:global} that, for LMMSS with an Armijo line-search scheme, every limit point of the generated sequence is a stationary point of the sum-of-squares function. 
Finally, Section~\ref{sec:app} presents several examples of parameter identification on heat conduction problems 
in which specific singular scaling matrices tend to be effective. 
The paper ends with final remarks and future perspectives in Section~\ref{sec:final}.

\section{Preliminaries and auxiliary results}\label{sec:pre}

In this section we present the assumptions needed in the convergence analysis and recall some facts about the generalized singular value decomposition that are useful to establish some auxiliary lemmas. 

Let $B(\bar{x},\rho)$ denotes a closed ball in $\R^n$ centered at $\bar{x}$ with radius $\rho$ and $\N (A)$ the null-space of a matrix $A$. 
Concerning the linear system \eqref{s1} and the matrix $L \in \R^{p \times n}$ we consider the following assumption. 

\begin{assumption}\label{Hip_nullJL}
The matrix $L\in \R^{p\times n}$ is full-rank, where $m\geq n\geq p$, and there exist $\gamma >0$ 
	such that, for every $x \in\R^n $ 
   \begin{equation}\label{comp-condition}
      \norm{J(x) v}^2 + \norm{L v} ^2 \geq \gamma \norm{v}^2, \quad \forall v \in \R^n. 
	\end{equation}
\end{assumption}

Inequality \eqref{comp-condition} is related to the \textit{completeness condition}, well-known in the inverse problems literature (see \cite[p. 34]{Morozov1984} and  \cite[p. 197]{engl}). We remark that Assumption~\ref{Hip_nullJL} is equivalent to 
	\begin{equation}\label{kernelJL}
	\N (J(x)) \cap \N (L) = \{ \0 \}, \quad  \forall x \in \R^n.
	\end{equation}

Condition \eqref{kernelJL} allows us to show that, for $\lambda_k > 0$, $(J_k^TJ_k + \lambda_k L^TL)$ is symmetric positive definite and thus 
the linear system \eqref{s1} has a unique solution $d_k$. 
Moreover, $d_k$ coincides with the unique minimizer of the following quadratic model
\begin{equation}\label{theta}
\theta_k (d) =  \|J_kd + F_k\|_2^2 + \lambda_k\|Ld\|_2^2.
\end{equation}

Motivated by the analysis in \cite{Yamashita}, we also assume that the Levenberg-Marquardt parameter (damping parameter) $\lambda_k$ is chosen as  follows.

\begin{assumption}\label{Hip_lambda}
	The damping parameter is chosen as $\lambda_k = \lambda (x_k) := \|F(x_k)\|_2^2$, for every $k\geq 0$. 
\end{assumption}

An useful tool for analyzing the matrix pair $(J_k,L)$ involved in LMMSS iterations is the generalized singular value decomposition (GSVD). 
It was introduced by Van Loan \cite{VanLoan76} and further developed in subsequent works \cite{VanLoan1985, Paige81}. 
Here we present it according to \cite[p. 22]{Hansen1998}. For a proof and other properties we recommend \cite{Bjorck1990Handbook}.

\begin{theorem}[{{GSVD, Hansen \cite[p. 22]{Hansen1998}}}]\label{teoGSVD}
	Consider a matrix pair $(A,L)$, where $A\in \R^{m\times n}$, $L\in \R^{p\times n}$, $m\geq n\geq p$, $\posto (L) = p$ and $\N (A) \cap \N(L) = \{ \0 \}$. 
	Then, there exist matrices $U \in\R^{m\times n}$ and $V\in \R^{p\times p}$ with orthonormal columns 
	and a non-singular $X\in \R^{n\times n}$ such that 
	\begin{equation}\label{GSVD}
	A = U \left[ \begin{array}{cc}
	\Sigma & \0 \\ 
	\0 & I_{n-p}
	\end{array}  \right] X^{-1}  \qquad \text{and} \qquad L = V \left[ \begin{array}{cc}
	M & \0
	\end{array}  \right] X^{-1},
	\end{equation}
	where $\Sigma$ and $M$ are diagonal matrices:
	\begin{equation}
	\Sigma = \diag (\sigma_{1}, \dots, \sigma_{p}) \in \R^{p\times p} \quad \text{and} \quad M = \diag (\mu_{1}, \dots, \mu_{p}) \in \R^{p\times p}.
	\end{equation}
	Moreover, the diagonal elements of $\Sigma$ and $M$ are non-negative, ordered such that 
	\begin{equation}
	0 \leq \sigma_{1} \leq \dots \leq \sigma_{p} \leq 1 \quad \text{and} \quad 1\geq \mu_{1} \geq \dots \geq \mu_{p} >0
	\end{equation}
	and normalized by the relation $\sigma_{ i}^{ 2} +\mu_{ i}^{ 2} = 1$, for $i = 1, \dots, p$. 
	The quotients 
	\begin{equation*}
	\gamma_{ i} = \frac{\sigma_{ i}}{\mu_{ i}}, \quad i = 1, \dots, p,
	\end{equation*}
	are called \textup{generalized singular values of the pair} $(A,L)$.
\end{theorem}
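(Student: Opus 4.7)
The plan is to prove the GSVD by reducing to a thin QR factorization of a stacked matrix and then applying the CS decomposition to the resulting orthonormal factor. First I would form the block matrix $K = \begin{bmatrix} A \\ L \end{bmatrix} \in \R^{(m+p) \times n}$ and observe that the hypothesis $\N(A) \cap \N(L) = \{\0\}$ forces $\N(K) = \{\0\}$; since $m + p \geq n$, the matrix $K$ has full column rank $n$. A thin QR factorization then yields $K = QR$ with $R \in \R^{n \times n}$ upper triangular and non-singular, and $Q \in \R^{(m+p) \times n}$ with orthonormal columns.

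Next I would partition $Q$ conformally as $\begin{bmatrix} Q_1 \\ Q_2 \end{bmatrix}$ with $Q_1 \in \R^{m \times n}$ and $Q_2 \in \R^{p \times n}$, giving $A = Q_1 R$ and $L = Q_2 R$. The identity $Q^TQ = I_n$ becomes $Q_1^TQ_1 + Q_2^TQ_2 = I_n$, which is precisely the setting of the CS decomposition. Invoking CSD for the pair $(Q_1, Q_2)$ in the regime $m \geq n \geq p$ produces $\tilde U \in \R^{m \times n}$ and $V \in \R^{p \times p}$ with orthonormal columns, together with an orthogonal $W \in \R^{n \times n}$, such that
\begin{equation*}
Q_1 = \tilde U \begin{bmatrix} \tilde\Sigma & \0 \\ \0 & I_{n-p} \end{bmatrix} W^T, \qquad Q_2 = V \begin{bmatrix} \tilde M & \0 \end{bmatrix} W^T,
\end{equation*}
with $\tilde\Sigma,\tilde M \in \R^{p\times p}$ non-negative diagonal and $\tilde\sigma_i^2 + \tilde\mu_i^2 = 1$. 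The structural $I_{n-p}$ block reflects that the last $n-p$ columns of $QW$ span (the image of) $\N(L)$, whose dimension is $n-p$ since $\posto(L) = p$; on this subspace $Q_2$ contributes zero while orthonormality forces the corresponding columns of $Q_1$ to form an orthonormal set.

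Finally I would set $X^{-1} := W^T R$, non-singular as the product of an orthogonal and an invertible upper-triangular matrix, so that
\begin{equation*}
A = Q_1 R = \tilde U \begin{bmatrix} \tilde\Sigma & \0 \\ \0 & I_{n-p} \end{bmatrix} X^{-1}, \qquad L = Q_2 R = V \begin{bmatrix} \tilde M & \0 \end{bmatrix} X^{-1},
\end{equation*}
matching \eqref{GSVD} up to a simultaneous column permutation (absorbed into $\tilde U$, $V$, $W$) that reorders the diagonal of $\tilde\Sigma$ increasingly; by the normalization $\tilde\sigma_i^2 + \tilde\mu_i^2 = 1$ this automatically orders $\tilde\mu_i$ decreasingly, and the strict positivity $\tilde\mu_i > 0$ is forced by $\posto(L) = p$ (a zero $\tilde\mu_i$ would drop the rank of $L$). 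The bounds $0 \leq \sigma_i \leq 1$ and $0 < \mu_i \leq 1$ are immediate from the same normalization.

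I expect the CS decomposition step to be the main obstacle: although CSD is a classical result (see the references \cite{VanLoan76, Paige81, Bjorck1990Handbook} already cited above), pinning down the exact block shape with the identity block $I_{n-p}$ requires a careful argument tied to the dimension of $\N(L)$ and the interplay between the two orthonormal pieces. Matching precise dimensions in the regime $m \geq n \geq p$, and justifying both the $I_{n-p}$ block and the strict positivity $\mu_i > 0$ from the full-row-rank hypothesis on $L$, is the part that demands genuine care rather than a direct citation.
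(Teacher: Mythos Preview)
The paper does not supply its own proof of this theorem: after stating it, the authors simply refer the reader to \cite{Bjorck1990Handbook} for a proof. Your proposal --- stack $A$ and $L$, take a thin QR to extract an $n$-column orthonormal block, then apply the CS decomposition and absorb the triangular factor into $X^{-1}$ --- is precisely the classical construction of the GSVD (it is in fact the approach underlying the references \cite{VanLoan76, Paige81} that the paper already cites). The argument is correct as outlined; the only place requiring care, as you yourself flag, is the precise block form of the CSD in the regime $m \geq n \geq p$, but this follows by taking the SVD of $Q_2$ and using $Q_1^TQ_1 + Q_2^TQ_2 = I_n$ to read off the structure of $Q_1 W$. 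Since $R$ is non-singular, $\posto(Q_2) = \posto(L) = p$, which simultaneously yields the $I_{n-p}$ block (the last $n-p$ columns of $Q_1 W$ are already orthonormal) and the strict positivity $\mu_i > 0$.
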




GSVD of the pair $(J_k,L)$ gives us a precise characterization of the direction $d_k$ in LMMSS.
In fact, given the GSVD 
\begin{equation*}
J_k = U_k \left[ \begin{array}{cc}
\Sigma_k & \0 \\ 
\0 & I_{n-p}
\end{array}  \right] X_k^{-1}  \quad \text{and} \quad L = V_k \left[ \begin{array}{cc}
M_k & \0
\end{array}  \right] X_k^{-1},
\end{equation*}
where 
\begin{equation*}
(\Sigma_k)_{ii} := \sigma_{i,k} \quad \text{and} \quad (M_k)_{ii} := \mu_{i,k}, \quad i = 1, \dots, p,
\end{equation*}
we obtain 
\begin{equation}\label{JmuL}
J_k^TJ_k + \lambda_k L^TL = X_k^{-T} \left[ \begin{array}{cc}
\Sigma_k^2 + \lambda_k M_k^2 & \0 \\ 
\0 & I_{n-p}
\end{array}  \right] X_k^{-1}.  
\end{equation}
Then, $d_k$ from \eqref{s1} can be expressed as 
\begin{equation*}
d_k = -X_k \left[ \begin{array}{cc}
(\Sigma_k^2 + \lambda_k M_k^2)^{-1}& \0 \\ 
\0 & I_{n-p}
\end{array}  \right] X_k^T\left (X_K^{-T}\left[ \begin{array}{cc}
\Sigma_k & \0 \\ 
\0 & I_{n-p}
\end{array}  \right] U_k^T \right) F_k, 
\end{equation*}
and further reduced to 
\begin{equation}\label{GSVD_dk}
d_k =  -X_k \left[ \begin{array}{cc}
\Gamma_k & \0 \\ 
\0 & I_{n-p}
\end{array}  \right] U_k^T  F_k, \quad \text{where} \quad \Gamma_k := (\Sigma_k^2 + \lambda_k M_k^2)^{-1}\Sigma_k.
\end{equation}
Since the columns of $U_k$ are orthonormal, it follows that 
\begin{equation}\label{est_dk}
\|d_k\|_2 \leq \|X_k\|_2 \max \{ \|\Gamma_k \|_2, 1 \} \|F_k\|_2, \quad \forall k \geq 0.
\end{equation}
The next two lemmas give bounds for $\|X_k\|$ and $\|\Gamma_k\|$ that will be helpful later to bound $\| d_k\|$ through \eqref{est_dk}.

\begin{lemma}\label{estimXk} For every $k \in \mathbb{N}$, we have 
     \begin{equation} \label{cotaXk}
     \|X_k\|_2 \le \dfrac{1}{\sqrt{\gamma}}.
     \end{equation}
\end{lemma}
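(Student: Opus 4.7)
The plan is to use the GSVD to rewrite the matrix $J_k^TJ_k + L^TL$ purely in terms of $X_k$, and then read off a bound on $\|X_k\|_2$ from the completeness condition.

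First, I would apply the GSVD of the pair $(J_k,L)$, exactly as in \eqref{JmuL} but with damping parameter set to $1$. This gives
\begin{equation*}
J_k^TJ_k + L^TL = X_k^{-T} \begin{bmatrix} \Sigma_k^2 + M_k^2 & \0 \\ \0 & I_{n-p} \end{bmatrix} X_k^{-1}.
\end{equation*}
The key algebraic simplification is the GSVD normalization $\sigma_{i,k}^2+\mu_{i,k}^2=1$ from Theorem~\ref{teoGSVD}, which makes the inner diagonal block collapse to $I_p$. Hence
\begin{equation*}
J_k^TJ_k + L^TL = X_k^{-T} X_k^{-1}.
\end{equation*}

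Next, I would invoke Assumption~\ref{Hip_nullJL}. Evaluating the completeness condition at $x=x_k$ yields, for every $v\in\R^n$,
\begin{equation*}
v^T(J_k^TJ_k+L^TL)v = \|J_k v\|_2^2 + \|Lv\|_2^2 \geq \gamma\|v\|_2^2,
\end{equation*}
so $X_k^{-T}X_k^{-1} \succeq \gamma I_n$ in the positive semidefinite order. This tells me that every eigenvalue of $X_k^{-T}X_k^{-1}$ is at least $\gamma$, equivalently $\sigma_{\min}(X_k^{-1})\geq\sqrt{\gamma}$.

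Finally, since $X_k$ is non-singular, $\|X_k\|_2 = 1/\sigma_{\min}(X_k^{-1}) \leq 1/\sqrt{\gamma}$, which is the desired bound. I do not expect any serious obstacle here: the only subtle step is recognizing that combining the GSVD representation with the GSVD normalization identity turns the somewhat opaque matrix $X_k$ into a direct quantitative proxy for the completeness constant $\gamma$; everything else is a one-line passage between positive semidefinite inequalities and singular values.
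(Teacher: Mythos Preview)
Your proof is correct and follows essentially the same route as the paper: use the GSVD with $\lambda_k=1$ together with the normalization $\sigma_{i,k}^2+\mu_{i,k}^2=1$ to obtain $J_k^TJ_k+L^TL = X_k^{-T}X_k^{-1}$, and then apply the completeness condition \eqref{comp-condition} (viewed as a Rayleigh-quotient lower bound) to conclude $\|X_k\|_2 \le 1/\sqrt{\gamma}$. The only cosmetic difference is that the paper phrases the last step via $\|X_k\|_2^2 = \|(J_k^TJ_k+L^TL)^{-1}\|_2$, whereas you pass through $\sigma_{\min}(X_k^{-1})$; these are equivalent.
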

\begin{proof}
From the GSVD of the pair $(J_k,L)$ it follows that  
\[ 
J_k^TJ_k + L^TL = X_k^{-T}X_k^{-1}, 
\]
as can be deduced from \eqref{JmuL} with $\lambda_k = 1$. 
Then, we obtain $X_kX_k^T = (J_k^TJ_k + L^TL)^{-1}$, and thus  
\[
\|X_k\|_2^2 = \| (J_k^TJ_k + L^TL)^{-1}\|_2. 
\]
Now, from Assumption~\ref{Hip_nullJL}, and Rayleigh quotient for symmetric matrices, we obtain inequality \eqref{cotaXk}. 
\end{proof}

Next, let us derive a bound for $\|\Gamma_k\|_2$. 
From the GSVD of the pair $(J_k,L)$, with generalized singular values $\gamma_{ i,k} = \sigma_{ i,k}/\mu_{ i,k}$, 
we may write
\begin{equation*}
\sigma_{ i,k}^2 = \frac{\gamma_{ i,k}^2 }{\gamma_{ i,k}^2 + 1} \quad \text{and} \quad \mu_{ i,k}^2 = \frac{1}{ \gamma_{ i,k}^2 + 1},
\end{equation*}
which allows us to express the diagonal matrix $\Gamma_k$ from \eqref{GSVD_dk} as 
\begin{equation*}
(\Gamma_k)_{ii} = \frac{\sigma_{ i,k }}{ \sigma_{ i,k}^2 + \lambda_k \mu_{ i,k}^2} = \frac{ \gamma_{ i,k} }{\gamma_{ i,k}^2 + \lambda_k} \sqrt{\gamma_{ i,k}^2 + 1}, \quad i = 1, \dots, p.
\end{equation*}


Therefore, we can bound $\| \Gamma_k \|_2$ in terms of the generalized singular values $\gamma_{i,k}$ and the damping parameter $\lambda_k$. 
To this end, we need the following technical lemma, whose proof can be found in the appendix.

\begin{lemma}\label{lemma_psi}
	For the function
	\begin{equation*}
	\psi(\gamma,\lambda)=\dfrac{\gamma \sqrt{1+\gamma^2}}{\gamma^2+\lambda}, \quad
	\gamma \geq 0, \ \lambda > 0,
	\end{equation*}
	the following properties hold.
	\begin{itemize}
		\item[(a)] For a fixed $\lambda \in (0,1/2)$, the function $\psi (\gamma, \lambda)$ has a unique maximum achieved at 
		\begin{equation*}
		\gamma_{\max} = \sqrt{ -\frac{ \lambda}{ 2\lambda-1}} \quad \text{with value} \quad \psi(\gamma_{\max},\lambda) = \max_{\gamma \geq 0} \psi(\gamma,\lambda)= \frac{1}{ 2 \sqrt{ \lambda - \lambda^2 }}.
		\end{equation*}
		\item[(b)] For a fixed $\lambda \in [1/2,+\infty)$, the function $\psi (\gamma, \lambda)$ is non-decreasing and bounded above. More precisely,
		\begin{equation*}
		\psi (\gamma, \lambda) \leq 1, \quad \forall \gamma \geq 0.
		\end{equation*}
	\end{itemize}
\end{lemma}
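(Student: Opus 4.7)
The plan is a direct one-variable calculus argument: for $\lambda$ fixed, study the sign of $\partial \psi/\partial \gamma$ and locate critical points. The key observation is that, after differentiating and clearing the (always positive) factor $\sqrt{1+\gamma^2}(\gamma^2+\lambda)^2$ in the denominator, the numerator of $\partial\psi/\partial\gamma$ simplifies to an affine expression in $\gamma^2$ whose sign is controlled entirely by $2\lambda-1$. This single algebraic identity drives both parts of the lemma.

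First I would apply the quotient rule to $\psi(\gamma,\lambda) = \gamma\sqrt{1+\gamma^2}/(\gamma^2+\lambda)$ and simplify. Writing $f(\gamma) = \gamma\sqrt{1+\gamma^2}$ and using $f'(\gamma) = (1+2\gamma^2)/\sqrt{1+\gamma^2}$, a short computation should give
\begin{equation*}
\frac{\partial \psi}{\partial \gamma}(\gamma,\lambda) \;=\; \frac{\lambda + (2\lambda-1)\gamma^2}{\sqrt{1+\gamma^2}\,(\gamma^2+\lambda)^2}.
\end{equation*}
This is the central formula; both items follow from reading its sign.

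For part (a), with $\lambda \in (0,1/2)$ we have $2\lambda-1 < 0$, so the numerator is strictly positive for small $\gamma$, vanishes at exactly one point, and becomes negative thereafter. Solving $\lambda + (2\lambda-1)\gamma^2 = 0$ yields $\gamma_{\max}^2 = \lambda/(1-2\lambda) = -\lambda/(2\lambda-1)$, which is the critical point in the statement and is necessarily a strict maximum. Substituting back, the identities $1+\gamma_{\max}^2 = (1-\lambda)/(1-2\lambda)$ and $\gamma_{\max}^2+\lambda = 2\lambda(1-\lambda)/(1-2\lambda)$ should telescope cleanly to give $\psi(\gamma_{\max},\lambda) = 1/\bigl(2\sqrt{\lambda-\lambda^2}\bigr)$.

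For part (b), with $\lambda \geq 1/2$ the numerator $\lambda + (2\lambda-1)\gamma^2$ is nonnegative for every $\gamma \geq 0$, so $\psi(\cdot,\lambda)$ is non-decreasing. To bound it by $1$, I would square and rearrange: $\psi(\gamma,\lambda) \leq 1$ is equivalent to $\gamma^2(1+\gamma^2) \leq (\gamma^2+\lambda)^2$, which simplifies to $(2\lambda-1)\gamma^2 + \lambda^2 \geq 0$; this is manifestly true when $\lambda \geq 1/2$. (Equivalently, one can note that $\psi(\gamma,\lambda) \to 1$ as $\gamma \to \infty$, which together with monotonicity yields the same bound.) I do not expect a genuine obstacle here — the entire argument hinges on the tidy form of the derivative's numerator; the only place to be careful is tracking the signs of $1-2\lambda$ when substituting into $\psi(\gamma_{\max},\lambda)$ so that the square roots come out correctly.
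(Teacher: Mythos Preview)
Your proposal is correct and follows essentially the same route as the paper: compute $\partial\psi/\partial\gamma$, observe that its numerator $\lambda + (2\lambda-1)\gamma^2$ is affine in $\gamma^2$, and read off the sign analysis for the two regimes of $\lambda$. The only cosmetic difference is in part~(b), where the paper uses monotonicity together with $\lim_{\gamma\to\infty}\psi(\gamma,\lambda)=1$ to get the bound, whereas you first give the equivalent algebraic inequality $(2\lambda-1)\gamma^2+\lambda^2\ge 0$ and mention the limit argument as an alternative; both are equally valid and equally short.
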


\section{Local convergence}\label{sec:local}
In this section, influenced by the convergence analysis of Yamashita and Fukushima \cite{Yamashita} for LMM with $I_n$ as scaling matrix,  
we prove local convergence of LMMSS iterations \eqref{s1}--\eqref{s2}, with $\alpha_k=1$, under Assumptions~\ref{Hip_nullJL}, \ref{Hip_lambda} 
and an error bound condition.


Let us start by stating the additional assumptions required in our analysis. 

\begin{assumption}\label{Hip_J}
	For any $x^* \in X^*$, there exist constants $\delta \in (0,1/2)$ and $c_1 > 0$ such that 
	\begin{equation}\label{quadapprox}
	\|J(y)(x-y) - (F(x)-F(y))\|_2 \leq c_1 \|x-y\|_2^2,
	\end{equation}
	for all $x, y \in B(x^*, 2\delta)$. 
\end{assumption}

Such assumption is standard in convergence analysis of LMM. 
We remark that if $F$ is continuous differentiable and $J$ is Lipschitz, then \eqref{quadapprox} holds. 
The left hand side of inequality \eqref{quadapprox} is often called \emph{linearization error}. 

Additionally, \eqref{quadapprox} implies that there exists a constant $c_F>0$ such that 
\begin{equation}\label{F_approx}
\|F(x)-F(y)\|_2 \leq c_F \|x-y\|_2, \quad \forall x, y \in B(x^*, 2\delta).
\end{equation}

Last, we need the following error bound condition. 

\begin{assumption}[\textit{Error bound}]\label{Hip_error_bound}
	For any $x^* \in X^*$, $\|F(x)\|_2$ provides a local error bound in $B(x^*, 2\delta)$ for the system of nonlinear equations $F(x) = \0$, i.e., 
	there exists $c_2 \in (0, \infty)$ such that 
	\begin{equation}\label{error_bound}
	c_2 \dist (x, X^*) \leq \|F(x)\|, \quad \forall x \in B(x^*, 2\delta).
	\end{equation}
\end{assumption}


Error bound conditions have been widely used in the literature \cite{Hoffman1952,Robinson81,Behling2019,Yamashita,FanYuan2005,Kanzow2004} 
as regularity conditions that allow us to analyze local convergence to non-isolated solutions \cite{Behling2019,Behling2021}. 
They are attractive for the convergence analysis of LMM for being weaker than other regularity conditions such as full-rank of the Jacobian. 

In what follows, we shall use the notation $\bar{x}_k$ to denote an element of $X^*$ such that 
\begin{equation*}
\|x_k -\bar{x}_k\|_2 = \dist(x_k, X^*).
\end{equation*}
We also recall that in the local convergence analysis, the step-size is fixed $\alpha_k=1$ through the iterations. 
Thus the update \eqref{s2} reduces to
\begin{equation}\label{xk_passo_unitario}
x_{k+1} = x_k+d_k, \quad k\geq 0.
\end{equation}

The next lemma is a generalization of a key lemma from \cite{Yamashita} to the context of LMMSS.

\begin{lemma}\label{lemma_1}
	Let Assumptions \ref{Hip_nullJL}--\ref{Hip_error_bound} hold. 
	If $x_k \in B(x^*, \delta)$, then $d_k$ from \eqref{s1} satisfies 
	\begin{equation}\label{eq:dk_dist}
	\|d_k\|_2 \leq c_3 \dist (x_k, X^*),
	\end{equation}
	and
	\begin{equation}\label{eq:JdFk_dist}
	\|J_k d_k + F_k\|_2 \leq c_4 \dist (x_k,X^*)^2,
	\end{equation}
	where $c_3 = \dfrac{1}{c_2 \sqrt{\gamma}} \sqrt{ c_1^2 + c_2^2 (\| L \|_2^2+ c_F^2) }$ e $c_4 = \sqrt{c_1^2 + c_F^2 \| L \|_2^2}$.
\end{lemma}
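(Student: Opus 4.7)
The plan is to combine the minimizing property of $d_k$ with the completeness condition of Assumption~\ref{Hip_nullJL}, using $\bar{d}_k := \bar{x}_k - x_k$ as the natural test direction. First, I would verify the setup: since $x_k \in B(x^*, \delta)$, one has $\|x_k - \bar{x}_k\|_2 = \dist(x_k, X^*) \leq \|x_k - x^*\|_2 \leq \delta$, so $\bar{x}_k \in B(x^*, 2\delta)$ and Assumptions~\ref{Hip_J} and \ref{Hip_error_bound}, together with \eqref{F_approx}, apply to both $x_k$ and $\bar{x}_k$. Writing $D_k := \dist(x_k, X^*)$ for brevity, and using $F(\bar{x}_k)=\0$, I obtain the three basic estimates $\|J_k \bar{d}_k + F_k\|_2 \leq c_1 D_k^2$ (from \eqref{quadapprox}), $\|L \bar{d}_k\|_2 \leq \|L\|_2 D_k$, and $\|F_k\|_2 \leq c_F D_k$ (from \eqref{F_approx}).

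Next, since $d_k$ minimizes $\theta_k$, I get $\theta_k(d_k) \leq \theta_k(\bar{d}_k)$. Combined with Assumption~\ref{Hip_lambda} in the form $\lambda_k = \|F_k\|_2^2 \leq c_F^2 D_k^2$, this yields
\begin{equation*}
\|J_k d_k + F_k\|_2^2 + \lambda_k \|L d_k\|_2^2 \leq c_1^2 D_k^4 + \lambda_k \|L\|_2^2 D_k^2 \leq (c_1^2 + c_F^2 \|L\|_2^2) D_k^4 = c_4^2 D_k^4,
\end{equation*}
which immediately gives \eqref{eq:JdFk_dist}.

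For \eqref{eq:dk_dist}, the strategy is to bound $\|J_k d_k\|_2$ and $\|L d_k\|_2$ separately and then invoke the completeness condition $\gamma \|d_k\|_2^2 \leq \|J_k d_k\|_2^2 + \|L d_k\|_2^2$. The key observation --- and what I expect to be the main non-routine step --- is that multiplying the normal equation \eqref{s1} by $d_k^T$ gives $\|J_k d_k\|_2^2 + \lambda_k \|L d_k\|_2^2 = -(J_k d_k)^T F_k$, so by Cauchy--Schwarz $\|J_k d_k\|_2^2 \leq \|J_k d_k\|_2 \|F_k\|_2$, whence $\|J_k d_k\|_2 \leq \|F_k\|_2 \leq c_F D_k$. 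This small trick is what makes the promised constant $c_3$ come out cleanly, since otherwise a naive triangle-inequality bound on $\|J_k d_k\|_2$ would produce extra terms.

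Finally, for $\|L d_k\|_2$, I use $\lambda_k \|L d_k\|_2^2 \leq \theta_k(\bar{d}_k) \leq c_1^2 D_k^4 + \lambda_k \|L\|_2^2 D_k^2$ together with the error bound $\lambda_k \geq c_2^2 D_k^2$ from Assumption~\ref{Hip_error_bound} to get $\|L d_k\|_2^2 \leq \left(c_1^2/c_2^2 + \|L\|_2^2\right) D_k^2$. Plugging both bounds into the completeness inequality gives
\begin{equation*}
\gamma \|d_k\|_2^2 \leq \left(\frac{c_1^2}{c_2^2} + \|L\|_2^2 + c_F^2\right) D_k^2 = \frac{c_1^2 + c_2^2(\|L\|_2^2 + c_F^2)}{c_2^2} D_k^2,
\end{equation*}
which rearranges exactly into $\|d_k\|_2 \leq c_3 D_k$, proving \eqref{eq:dk_dist}.
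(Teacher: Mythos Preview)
Your proof is correct and follows the same overall architecture as the paper: both use the completeness inequality $\gamma\|d_k\|_2^2 \le \|J_k d_k\|_2^2 + \|L d_k\|_2^2$, bound $\|L d_k\|_2$ via the minimizing property of $d_k$ together with the error bound $\lambda_k \ge c_2^2 D_k^2$, and obtain \eqref{eq:JdFk_dist} from $\theta_k(d_k)\le\theta_k(\bar d_k)$ combined with $\lambda_k\le c_F^2 D_k^2$. The one genuine difference is how $\|J_k d_k\|_2 \le \|F_k\|_2$ is established. The paper invokes the GSVD of $(J_k,L)$ to show that $\|J_k(J_k^TJ_k+\lambda_kL^TL)^{-1}J_k^T\|_2\le 1$, whereas you obtain the same bound by the elementary device of multiplying the normal equation by $d_k^T$ and applying Cauchy--Schwarz. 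Your route is shorter and entirely self-contained, avoiding the GSVD machinery; the paper's route, on the other hand, is consistent with the GSVD framework already set up for Lemmas~\ref{estimXk} and~\ref{lemma_psi} and makes the spectral reason for the bound explicit.
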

\begin{proof}
	Since $x_k \in B(x^*, \delta)$, it follows that 
	\begin{equation*}
	\|\bar{x}_k - x^*\|_2 \leq \|\bar{x}_k - x_k\|_2 + \|x_k - x^*\|_2 \leq \|x_k - x^*\|_2 + \|x_k - x^*\|_2 \leq 2 \delta,
	\end{equation*}
	which implies in $\bar{x}_k \in B(x^*, 2 \delta)$. 
	
	Then, from Assumption~\ref{Hip_nullJL}, with $v=d_k$, we obtain 
	\begin{equation}\label{estim-dk}
	 \gamma \|d_k\|_2^2 \le \|J_kd_k\|_2^2+\|Ld_k\|_2^2.
	 \end{equation}
	Let us bound each term in the right hand side separately. First, 
	from the GSVD of the pair $(J_k,L)$, it follows that 
	\begin{equation*}
	J_k(J_k^TJ_k+\lambda_kL^T L)^{-1}J_k^{T} = U_k \left[ \begin{array}{cc}
	\Sigma_k (\Sigma_k^2 + \lambda_k M_k^2)^{-1}\Sigma_k & \0 \\ 
	\0 & I_{n-p}
	\end{array}  \right] U_k^T,
	\end{equation*}
	and since the columns of $U_k$ are orthonormal, we have 
	\begin{equation*}
	\|J_k(J_k^TJ_k+\lambda_kL^T L)^{-1}J_k^{T}\|_2 \leq \max_{ 1\leq i \leq p} \left\{ \frac{\sigma_{ i,k }^2 }{ \sigma_{ i,k }^2 + \lambda_k \mu_{ i,k}^2}, 1 \right\} \leq 1,
	\end{equation*}
	because $\lambda_k \mu_{ i,k}^2 > 0$, for every $k$. From this inequality and \eqref{s1}, it follows that 
	\begin{equation}\label{ineq1} 
	\|J_kd_k\|_2^2 = \|J_k(J_k^TJ_k+\lambda_kL^T L)^{-1}J_k^{T}F_k\|_2^2\le 
	\|F_k\|_2^2\le c_F^2\|\bar x_k-x_k\|_2^2,
	\end{equation}
	where the last inequality came from \eqref{F_approx} and the fact that $F(\bar x_k) = \0$. 
	
	Second, since $d_k$ is the unique minimizer of $\theta_k(d)$ (see~\eqref{theta}), we have 
	 $\lambda_k\|Ld_k\|_2^2 \le \theta_k(d_k)\le \theta_k(d)$, for all $d \in \R^n$, in particular, for $d = \bar{x}_k - x_k$:
	\begin{equation*}
	\lambda_k \|Ld_k\|_2^2 \le \theta_k(d_k) \le \theta_k(\bar x_k-x_k) = \|J_k(\bar x_k-x_k)+ F_k\|_2^2+ 
	\lambda_k\|L(\bar x_k-x_k)\|_2^2.
	\end{equation*}
Then, from Assumption~\ref{Hip_J}, 
we obtain 
\[
	 \lambda_k \|Ld_k\|_2^2 \le c_1^2\|\bar x_k-x_k\|_2^4+\lambda_k \| L \|_2^2 \|\bar x_k-x_k\|_2^2.
\]
By using Assumptions~\ref{Hip_lambda} and \ref{Hip_error_bound}, we obtain
\[
\lambda_k = \|F_k\|_2^2\ge c_2^2\|\bar x_k-x_k\|_2^2,
\]
and plugging it in the previous inequality leads to 
	\begin{equation}\label{ineq2}
	\|L d_k\|_2^2\leq \frac{c_1^2 }{ \lambda_k }\|\bar x_k-x_k\|_2^4+ \| L \|_2^2 \|\bar x_k-x_k\|_2^2 \le \dfrac{c_1^2+c_2^2 \|L\|_2^2 }{c_2^2}\|\bar x_k-x_k\|_2^2.
	\end{equation}
	
	Then, replacing \eqref{ineq1} and \eqref{ineq2} in \eqref{estim-dk}, implies that 
	\begin{equation*}
	\|d_k\|_2 \le c_3 \|\bar x_k-x_k\|_2,
	\end{equation*}
	with $c_3 = \dfrac{1}{c_2 \sqrt{\gamma}} \sqrt{ c_1^2 + c_2^2 (\| L \|_2^2+ c_F^2) }$.

	To prove \eqref{eq:JdFk_dist}, we use again the fact that $d_k$ is the minimizer of $\theta_k (d)$, that $F(\bar{x}_k) = \0$ 
	and Assumption~\ref{Hip_J} to obtain 
	\begin{align*}
	\|J_kd_k + F_k\|_2^2 &\leq \theta_k (d_k) \leq \theta_k (\bar{x}_k - x_k) \\
	&= \|J_k(\bar{x}_k - x_k) - (F(\bar{x}_k) - F_k)\|_2^2 + \lambda_k\|L(\bar{x}_k - x_k)\|_2^2\\
	&\leq c_1^2\|\bar{x}_k - x_k\|_2^4 + \lambda_k \| L \|_2^2 \|\bar{x}_k - x_k\|_2^2.
	\end{align*}
 Since $\lambda_k = \|F_k\|_2^2 = \|F_k - F(\bar{x}_k)\|_2^2 \leq c_F^2 \|x_k - \bar{x}_k\|_2^2$, we conclude that 
	\begin{equation*}
	\|J_kd_k + F_k\|_2^2 \leq (c_1^2 + c_F^2 \|L\|_2^2) \|x_k - \bar{x}_k\|_2^4.
	\end{equation*}
	Thus, defining $c_4 = \sqrt{c_1^2 + c_F^2 \|L\|_2^2}$, we have proved 
	\begin{equation*}
	\|J_kd_k + F_k\|_2 \leq c_4 \|x_k - \bar{x}_k\|_2^2 = c_4 \dist (x_k,X^*)^2.
	\end{equation*}
\end{proof}

With Lemma~\ref{lemma_1} at hand, the remainder of the local convergence analysis is exactly the same as in the seminal work \cite{Yamashita}. 
Hence, we only recap the main results in \cite{Yamashita} without proofs. In summary, these results state that if $x_0$ is sufficiently close to some $x^* \in X^*$, 
then the sequence $\{x_k\}$ generated by LMMSS (with $\alpha_k=1$) converges to some point in $X^*$ quadratically. 

Next lemma is useful to show that $\dist (x_k,X^*)$ goes to zero quadratically. 

\begin{lemma}[{{Yamashita e Fukushima \cite{Yamashita}}}]\label{lemma_2}
	If $x_k, x_{k-1} \in B(x^*, \delta)$, then it holds that 
	\begin{equation*}
	\dist (x_k,X^*) \leq c_5 \dist (x_{k-1},X^*)^2,
	\end{equation*}
	where $c_5 = (c_1 c_3^2 + c_4)/c_2$.
\end{lemma}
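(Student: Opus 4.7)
The plan is to use the error bound at the new iterate $x_k$ to translate $\dist(x_k, X^*)$ into $\|F(x_k)\|_2$, and then bound $\|F(x_k)\|_2$ in terms of $\dist(x_{k-1}, X^*)^2$ by splitting $F(x_k)$ into the linearized residual $J_{k-1} d_{k-1} + F_{k-1}$ (controlled by \eqref{eq:JdFk_dist}) and the linearization error (controlled by Assumption~\ref{Hip_J} together with \eqref{eq:dk_dist}).

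Concretely, since $x_k \in B(x^*,\delta) \subset B(x^*,2\delta)$, Assumption~\ref{Hip_error_bound} gives
\begin{equation*}
c_2\,\dist(x_k, X^*) \le \|F(x_k)\|_2.
\end{equation*}
Using the update rule $x_k = x_{k-1} + d_{k-1}$, I would write
\begin{equation*}
F(x_k) \;=\; \bigl(J_{k-1} d_{k-1} + F_{k-1}\bigr) \;+\; \bigl(F(x_{k-1}+d_{k-1}) - F(x_{k-1}) - J_{k-1} d_{k-1}\bigr),
\end{equation*}
and apply the triangle inequality. The first term is bounded by $c_4\,\dist(x_{k-1},X^*)^2$ thanks to \eqref{eq:JdFk_dist}. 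For the second term, since both $x_{k-1}$ and $x_k = x_{k-1}+d_{k-1}$ lie in $B(x^*,\delta) \subset B(x^*,2\delta)$, Assumption~\ref{Hip_J} yields
\begin{equation*}
\|F(x_k) - F(x_{k-1}) - J_{k-1} d_{k-1}\|_2 \le c_1 \|d_{k-1}\|_2^2 \le c_1 c_3^2\,\dist(x_{k-1},X^*)^2,
\end{equation*}
where the last inequality uses \eqref{eq:dk_dist} from Lemma~\ref{lemma_1}.

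Combining these bounds gives $\|F(x_k)\|_2 \le (c_1 c_3^2 + c_4)\,\dist(x_{k-1},X^*)^2$, and dividing by $c_2$ yields the claimed estimate with $c_5 = (c_1 c_3^2 + c_4)/c_2$. The argument is essentially routine once the two estimates of Lemma~\ref{lemma_1} are in hand; the only subtlety is verifying that the points at which Assumption~\ref{Hip_J} and Assumption~\ref{Hip_error_bound} are invoked indeed lie in $B(x^*,2\delta)$, which follows directly from the hypothesis $x_k, x_{k-1} \in B(x^*, \delta)$. I do not anticipate a real obstacle here, since the singularity of $L$ has already been absorbed into the constants $c_3$ and $c_4$ in Lemma~\ref{lemma_1}.
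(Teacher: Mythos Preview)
Your proposal is correct and is precisely the standard Yamashita--Fukushima argument that the paper defers to (the paper does not reprove this lemma, stating that ``the remainder of the local convergence analysis is exactly the same as in the seminal work \cite{Yamashita}''). The decomposition of $F(x_k)$ into the linearized residual plus the linearization error, followed by the bounds from Lemma~\ref{lemma_1} and the error bound assumption, is exactly the intended route, and your check that the relevant points lie in $B(x^*,2\delta)$ is the only care needed.
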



Now, Lemma~\ref{lemma_3} ensures that if $x_0$ is close enough to $x^*$, then all the iterates $x_k$ also belong to a neighborhood of $x^*$ as well. 

\begin{lemma}[{{Yamashita e Fukushima \cite{Yamashita}}}]\label{lemma_3}
	If $x_0 \in B(x^*, r)$, then $x_k \in B(x^*, \delta)$ for every $k$, where 
	\begin{equation*}
	r := \min \left\{ \frac{\delta}{2+4c_3}, \frac{1}{2c_5} \right\}.
	\end{equation*}
\end{lemma}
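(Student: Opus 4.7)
The plan is to argue by induction on $k$, in the spirit of \cite{Yamashita}, using the two ingredients already prepared: the displacement bound $\|d_k\|_2 \leq c_3\,\dist(x_k,X^*)$ from Lemma~\ref{lemma_1} and the squared-distance recursion $\dist(x_k,X^*) \leq c_5\,\dist(x_{k-1},X^*)^2$ from Lemma~\ref{lemma_2}. The two constraints in the definition of $r$ play complementary roles: $r \leq 1/(2c_5)$ will force the distance sequence $\dist(x_j,X^*)$ to contract geometrically by a factor $1/2$, and $r \leq \delta/(2+4c_3)$ will absorb the cumulative travel $\sum_j \|d_j\|_2$ inside a ball of radius $\delta$.

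The base case $k=0$ is immediate since $r \leq \delta$. For the inductive step I would assume $x_0,x_1,\dots,x_k \in B(x^*,\delta)$ and show $x_{k+1} \in B(x^*,\delta)$. The main auxiliary claim is the geometric decay
\begin{equation*}
\dist(x_j,X^*) \leq (1/2)^j\,\dist(x_0,X^*), \quad j = 0, 1, \dots, k,
\end{equation*}
established by a secondary induction on $j$. The step $j-1\to j$ invokes Lemma~\ref{lemma_2} (legitimate because both $x_j$ and $x_{j-1}$ belong to $B(x^*,\delta)$ by the outer hypothesis), and then uses $\dist(x_{j-1},X^*) \leq \dist(x_0,X^*) \leq r \leq 1/(2c_5)$ to turn the squared factor into a contraction by $1/2$.

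With this decay in hand, I would bound $x_{k+1}$ by telescoping through the updates $x_{j+1}=x_j+d_j$ and applying Lemma~\ref{lemma_1}:
\begin{equation*}
\|x_{k+1}-x^*\|_2 \leq \|x_0-x^*\|_2 + \sum_{j=0}^{k}\|d_j\|_2 \leq r + c_3\sum_{j=0}^{k}\dist(x_j,X^*) \leq r + 2c_3\,\dist(x_0,X^*),
\end{equation*}
where the geometric decay has been summed. Since $\dist(x_0,X^*)\leq\|x_0-x^*\|_2\leq r$, the right-hand side is at most $r(1+2c_3) \leq \delta(1+2c_3)/(2+4c_3) = \delta/2 < \delta$, closing the outer induction. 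There is no genuine analytical obstacle here; the main care is bookkeeping the two nested inductions and verifying at each appeal to Lemmas~\ref{lemma_1} and~\ref{lemma_2} that the relevant iterates actually sit in $B(x^*,\delta)$. All of the nontrivial analytic content has already been deposited in those two lemmas, and Lemma~\ref{lemma_3} merely wires them together with the calibrated choice of $r$.
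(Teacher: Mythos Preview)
Your proposal is correct and is precisely the standard argument from \cite{Yamashita}; the paper itself does not reproduce a proof of this lemma but merely cites that reference, so your write-up fills in exactly what the paper omits. The nested induction, the geometric decay of $\dist(x_j,X^*)$ driven by $r\le 1/(2c_5)$, and the telescoping bound using $r\le \delta/(2+4c_3)$ are all as in the original.
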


With the last two lemmas, and considering Assumptions \ref{Hip_nullJL}--\ref{Hip_error_bound}, we have the following theorem about the quadratic local convergence of LMMSS.

\begin{theorem}\label{theorem_1}
	Let $\{ x_k \}$ be the sequence generated by LMMSS: 
	\begin{align*}
	(J_k^TJ_k + \lambda_k L^TL) d_k &= -J_k^TF_k, \\
	x_{k+1} &= x_k + d_k, 
	\end{align*}
	with $x_0 \in B(x^*, r)$. Then, $\{ \dist (x_k,X^*) \}$ converges to zero quadratically. 
	Furthermore, the sequence $\{ x_k \}$ converges to a solution $\hat{x} \in X^* \cap B(x^*, \delta)$.
\end{theorem}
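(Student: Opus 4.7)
The plan is to string together the three preceding lemmas with an elementary induction, so the theorem essentially reduces to bookkeeping. First, I would invoke Lemma~\ref{lemma_3}: because $x_0 \in B(x^*, r)$ with $r \leq \delta/(2+4c_3)$, every iterate $x_k$ lies in $B(x^*,\delta)$. This is the single ingredient needed to make Lemmas~\ref{lemma_1} and \ref{lemma_2} applicable uniformly in $k$.

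With the iterates trapped in $B(x^*,\delta)$, the quadratic rate of $\{\dist(x_k,X^*)\}$ is just Lemma~\ref{lemma_2} read verbatim: $\dist(x_k,X^*) \leq c_5 \dist(x_{k-1},X^*)^2$ for all $k\geq 1$. To see that the sequence actually converges to zero, I would set $q := c_5 \dist(x_0, X^*)$; since $\dist(x_0,X^*) \leq \|x_0-x^*\|_2 \leq r \leq 1/(2c_5)$, one has $q \leq 1/2$, and a one-line induction gives $c_5\dist(x_k,X^*) \leq q^{2^k}$, so the distances decay super-geometrically.

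For convergence of $\{x_k\}$ itself, I would show the sequence is Cauchy by combining Lemma~\ref{lemma_1} with the super-geometric decay just established. Since $\|x_{k+1}-x_k\|_2 = \|d_k\|_2 \leq c_3 \dist(x_k,X^*)$, for any $j > k$ we get
\[
\|x_j-x_k\|_2 \;\leq\; \sum_{i=k}^{j-1}\|d_i\|_2 \;\leq\; \frac{c_3}{c_5}\sum_{i=k}^{j-1} q^{2^i},
\]
and the tail of $\sum_i q^{2^i}$ vanishes because $q<1$. Hence $\{x_k\}$ converges to some $\hat{x}$, and since the closed ball $B(x^*,\delta)$ contains all $x_k$, it contains $\hat{x}$.

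It remains to argue $\hat{x}\in X^*$. Picking $\bar{x}_k \in X^*$ with $\|x_k-\bar{x}_k\|_2 = \dist(x_k,X^*)$, the Lipschitz-type bound \eqref{F_approx} gives $\|F(x_k)\|_2 = \|F(x_k)-F(\bar{x}_k)\|_2 \leq c_F\dist(x_k,X^*) \to 0$, and continuity of $F$ then yields $F(\hat{x})=\0$. The only mildly non-routine point in the whole argument is the Cauchy step, which is why summability of $\|d_k\|_2$ via the super-geometric bound on distances is really the heart of the proof; everything else is a direct citation of the lemmas and closedness of $B(x^*,\delta)$ and $X^*$.
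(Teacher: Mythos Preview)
Your argument is correct and follows exactly the route the paper intends: the paper's own proof is a one-line reference to \cite[Theorem~2.1]{Yamashita}, and what you have written is precisely that standard argument---invoke Lemma~\ref{lemma_3} to trap the iterates, feed Lemma~\ref{lemma_2} into a $q^{2^k}$ induction, and use Lemma~\ref{lemma_1} to obtain summability of $\|d_k\|_2$ and hence a Cauchy sequence. Nothing is missing.
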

\begin{proof}
The proof is the same as \cite[Theorem~2.1]{Yamashita} but using the inequalities and corresponding constants of Lemma~\ref{lemma_1}. 
\end{proof}


\section{Global convergence}\label{sec:global}
\begin{algorithm}
	\caption{LMMSS with line-search}
	\label{alg:LMM_armijo}
	\begin{algorithmic}[1]
		\Require $\nu, \eta, \vartheta \in (0,1)$, $F$, $J$, $L$ and $x_0 \in \R^n$
		\State $\lambda_0 = \|F(x_0) \|_2^2$
		\For{$k = 0,1,\dots$}
		\If{(stopping criterion verified)}
		\State \Return $\bar{x} = x_{k}$ 
		\EndIf
		\State $d_k = -(J_k^T J_k + \lambda_k L^T L)^{-1} J_k^T F_k$ \Comment{Descent direction}
		\If{$\|F (x_k+ d_k)\|_2 \leq \vartheta \|F (x_k)\|_2$}
		\State $\alpha_k = 1$
		\Else \Comment{Armijo condition}
		\State Choose $m$ as the smallest non-negative integer such that 
		\begin{equation}\label{armijo}
		\phi (x_k+ \eta^m{d}_k) - \phi ({x}_k) \leq \nu \eta^m \nabla \phi ({x}_k)^T {d}_k
		\end{equation} 
		\State $\alpha_k = \eta^m$
		\EndIf
		\State ${x}_{k+1} = {x}_k+ \alpha_k {d}_k$
		\State $\lambda_{k+1} = \|F({x}_{k+1}) \|_2^2$
		\EndFor
	\end{algorithmic}
\end{algorithm}

In this section, we consider Algorithm~\ref{alg:LMM_armijo}: a version of LMMSS where the step-size is selected by line-search, such that the Armijo condition is satisfied, and prove that any limit point of the sequence generated by this algorithm is a stationary point for \eqref{problem}, regardless the starting point. 

To this end, it will suffice to prove that the sequence of directions $\{d_k\}$ generated by Algorithm~\ref{alg:LMM_armijo} is \emph{gradient-related} 
and then use a result from \cite[Proposition~1.2.1]{Bertsekas1999}.  

\begin{definition}[\textit{Gradient-related}] \label{def_grad_related}
	Let $\{x_k \}$ and $\{d_k\}$ be sequences in $\R^n$. 
	The sequence $\{d_k\}$ is said \textsf{\textup{gradient-related} to $\{ x_k \}$} if for every subsequence $\{x_k\}_{k\in \mathcal{K}}$ (with $\K \subseteq \mathbb{N}$) 
	converging to a non-stationary point, the corresponding subsequence $\{d_k\}_{k\in \mathcal{K}}$ is bounded and satisfies 
	\begin{equation}\label{liminf_dk}
	\limsup_{k \rightarrow \infty, \ k\in \mathcal{K}} \nabla \phi (x_k)^T d_k < 0.
	\end{equation}
\end{definition}

\begin{proposition}\label{prop_dk}
	Assume Assumptions~\ref{Hip_nullJL}~and~\ref{Hip_lambda} hold. Let $\{d_k\}$, $\{x_k\}$ be the sequences generated by Algorithm~\ref{alg:LMM_armijo}.  Then, $\{d_k\}$ is \textup{gradient-related}. 
\end{proposition}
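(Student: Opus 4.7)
The plan is to verify the two conditions in Definition~\ref{def_grad_related} for an arbitrary subsequence $\{x_k\}_{k \in \K}$ converging to a non-stationary point $\bar{x}$ of $\phi$. The crucial observation is that non-stationarity, namely $\nabla\phi(\bar{x}) = J(\bar{x})^T F(\bar{x}) \neq \0$, forces $F(\bar{x}) \neq \0$; hence by Assumption~\ref{Hip_lambda} and continuity of $F$, the damping parameters $\lambda_k = \|F(x_k)\|_2^2$ converge to $\bar{\lambda} := \|F(\bar{x})\|_2^2 > 0$ along $\K$, and one may fix $\underline{\lambda} \in (0, \bar\lambda)$ together with an index $k_0$ so that $\lambda_k \geq \underline{\lambda}$ for all $k \geq k_0$ in $\K$.

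To prove boundedness of $\{d_k\}_{k \in \K}$, I would show that the coefficient matrix $H_k := J_k^T J_k + \lambda_k L^T L$ is uniformly positive definite along this tail. Combining Assumption~\ref{Hip_nullJL} with the elementary estimate $\|J_k v\|_2^2 + \lambda_k \|L v\|_2^2 \geq \min\{1,\lambda_k\}\bigl(\|J_k v\|_2^2 + \|L v\|_2^2\bigr) \geq \min\{1,\underline{\lambda}\}\gamma\|v\|_2^2$, valid for $k \geq k_0$ in $\K$, gives $\|H_k^{-1}\|_2 \leq [\min\{1,\underline{\lambda}\}\gamma]^{-1}$. Since $\{J_k\}_{k \in \K}$ and $\{F_k\}_{k \in \K}$ are bounded by continuity on the convergent subsequence, the identity $d_k = -H_k^{-1} J_k^T F_k$ from \eqref{s1} immediately yields a uniform bound on $\|d_k\|_2$.

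For the descent estimate, substituting into $\nabla\phi(x_k)^T d_k = (J_k^T F_k)^T d_k$ produces the quadratic form
\[
\nabla\phi(x_k)^T d_k \;=\; -(J_k^T F_k)^T H_k^{-1} (J_k^T F_k) \;\leq\; -\frac{\|J_k^T F_k\|_2^2}{\|J_k\|_2^2 + \lambda_k \|L\|_2^2},
\]
where the last inequality uses $\|H_k\|_2 \leq \|J_k\|_2^2 + \lambda_k\|L\|_2^2$ together with the spectral bound $(J_k^T F_k)^T H_k^{-1}(J_k^T F_k) \geq \|J_k^T F_k\|_2^2/\|H_k\|_2$. Taking $k \to \infty$ along $\K$, the numerator tends to $\|\nabla\phi(\bar{x})\|_2^2 > 0$ and the denominator to the finite quantity $\|J(\bar{x})\|_2^2 + \bar\lambda\|L\|_2^2$, delivering a strictly negative limsup and hence \eqref{liminf_dk}.

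The main obstacle---and precisely the point where the singularity of $L^T L$ could cause trouble---is bounding $\|H_k^{-1}\|_2$ uniformly. The completeness condition \eqref{comp-condition} by itself does not provide a positive spectral lower bound on $H_k$ if one allows $\lambda_k \to 0$; it is the non-stationarity of $\bar{x}$ that pins $\lambda_k$ away from zero along $\K$ and thereby activates Assumption~\ref{Hip_nullJL}. As an alternative, the boundedness of $\{d_k\}_{k \in \K}$ could equally be read off from \eqref{est_dk} combined with Lemma~\ref{estimXk} and Lemma~\ref{lemma_psi} applied at $\underline{\lambda}$ (using monotonicity of $\lambda \mapsto \psi(\gamma,\lambda)$ in the second argument).
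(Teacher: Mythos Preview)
Your proof is correct and takes a genuinely different, more elementary route than the paper. For the descent estimate, the paper works through the GSVD of $(J_k,L)$ and Ostrowski's law of inertia to obtain $\nabla\phi(x_k)^T d_k \leq -\gamma\min\{1,\underline\lambda\}\,\|d_k\|_2^2$, and then converts this to a bound in $\|\nabla\phi(x_k)\|_2^2$ via $\|\nabla\phi(x_k)\|_2 \leq M_0\|d_k\|_2$; you instead write $\nabla\phi(x_k)^T d_k = -\nabla\phi(x_k)^T H_k^{-1}\nabla\phi(x_k)$ and bound directly by $-\|\nabla\phi(x_k)\|_2^2/\|H_k\|_2$, which is shorter and yields a cleaner constant. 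For boundedness of $\{d_k\}_{k\in\K}$, the paper invokes the GSVD estimate \eqref{est_dk} together with Lemma~\ref{estimXk} and the technical Lemma~\ref{lemma_psi} (with a case split on $\lambda_k \lessgtr 1/2$), whereas you get a uniform spectral lower bound on $H_k$ straight from Assumption~\ref{Hip_nullJL} and $\lambda_k\geq\underline\lambda$. Your argument thus bypasses the entire GSVD apparatus in favor of simple Rayleigh-quotient bounds on $H_k$; the paper's route, while heavier, reuses machinery already built for the local analysis. Both proofs rest on the same pivotal observation you correctly highlight: non-stationarity of the limit point pins $\lambda_k$ away from zero along $\K$, which is exactly what restores uniform positive-definiteness of $H_k$ despite the singularity of $L^TL$.
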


%
\begin{proof}
	Assume Algorithm~\ref{alg:LMM_armijo} generates an infinite sequence $\{x_k\}$, that is $\| F(x_k) \|_2 > 0$ for every $k$, and let $\{x_k\}_{k \in \mathcal{K }}$, $\mathcal{K} \subseteq \mathbb{N}$, be a subsequence converging to $x_{\infty}$, a non-stationary point for $\phi$, that is, $\nabla \phi (x_\infty) \neq \0$. 
	To soften the notation, from here on consider $k\in \mathcal{K}$. 
	From \eqref{JmuL} and the definition of $d_k$, it follows that 
\begin{align*}
\nabla \phi (x_k)^T d_k & = - d_k^T(J_k^T J_k + \lambda_k L^T L) d_k \\
\ & = -d_k^T X_k^{-T} \left[ \begin{array}{cc}
\Sigma_k^2 + \lambda_k M_k^2 & \0 \\ 
\0 & I_{n-p} \end{array}  \right] X_k^{-1} d_k \\
\ & :=  -d_k^T X_k^{-T} \Lambda_k X_k^{-1} d_k. \\
\end{align*}
From Ostrowski's law of inertia \cite[Theorem~1]{Ostrowski1959} we have 
\[
\lambda_n ( X_k^{-T} \Lambda_k X_k^{-1} ) = \sigma_n^2(X_k^{-1}) \lambda_n (\Lambda_k),
\]
where, with acknowledged abuse of notation, $\lambda_n(B)$ and $\sigma_n(B)$ stand for the smallest eigenvalue and smallest singular value of a matrix $B$, respectively. 
Then, from Lemma~\ref{estimXk}, it follows that 
\[
\lambda_n ( X_k^{-T} \Lambda_k X_k^{-1} ) \geq \gamma \lambda_n (\Lambda_k) \geq \gamma \min \{ 1, \lambda_k \},
\]
where we have used in the last inequality the fact that $\sigma_{i,k}^2 + \mu_{i,k}^2 = 1$. 
Since, $x_{\infty}$ is non-stationary, there exists a $\underline{\lambda} > 0$ such that for $k \in {\cal K}$, $\lambda_k = \|F(x_k)\|_2^2 \geq \underline{\lambda}$. 
Thus, we obtain
\begin{equation}\label{eq:angle}
\nabla \phi (x_k)^T d_k \leq - \gamma \min \{1,\underline{\lambda} \} \|d_k\|_2^2. 
\end{equation}

On the other hand, 
\begin{align*}
\| \nabla \phi(x_k) \|_2 & = \| (J_k^T J_k + \lambda_k L^T L) d_k \|_2 \leq \| J_k^T J_k + \lambda_k L^T L \|_2 \| d_k \|_2 \\
\ & \leq \left( \|J_k\|_2^2 + \| F_k \|_2^2 \| L \|_2^2 \right) \| d_k \|_2. \\
\end{align*}
Since $\{ \|F(x_k)\|_2 \}$ from Algorithm~\ref{alg:LMM_armijo} is non-increasing, we have $\|F(x_k)\|_2 \leq \| F(x_0)\|_2$, for every $k$. 
From the continuity of $J(x)$ and convergence of the subsequence $\{x_k\}_{\cal K}$, there exists $M_J > 0$ such that 
$\| J_k \|_2 \leq M_J$, for $k \in {\cal K}$. Therefore, 
\begin{equation}\label{eq:prop}
\| \nabla \phi(x_k) \|_2 \leq \left( M_J^2 + \| F(x_0)\|_2^2 \| L \|_2^2 \right) \| d_k \|_2 =: M_0 \|d_k\|_2, \quad \forall k \in {\cal K}.
\end{equation}

Hence, from \eqref{eq:prop} and \eqref{eq:angle}, we obtain
\[
\nabla \phi (x_k)^T d_k \leq - \frac{\gamma \min \{1,\underline{\lambda} \}}{M_0^2} \| \nabla \phi (x_k) \|_2^2, \quad \forall k \in {\cal K},
\]
which implies that 
\[
	\limsup_{k \rightarrow \infty, \ k\in \mathcal{K}} \nabla \phi (x_k)^T d_k \leq - \frac{\gamma \min \{1,\underline{\lambda} \}}{M_0^2} \| \nabla \phi (x_{\infty}) \|_2^2 < 0.
\]
	
It remains to prove that $\{d_k\}_{k\in \mathcal{K}}$ is bounded.  Recall from \eqref{est_dk} that 
	\begin{equation}\label{est_prop_dk}
	\|d_k\|_2 \leq \|X_k\|_2 \max \{ \|\Gamma_k \|_2, 1 \} \|F_k\|_2, \quad \forall k\geq 0,
	\end{equation}
	where $\Gamma_k = (\Sigma_k^2 + \lambda_k M_k^2)^{-1}\Sigma_k$.
	From Lemma~\ref{estimXk}, we have $\|X_k\|_2 \leq 1/\sqrt{\gamma}$. 
	Let us analyze the term $\max \{ \|\Gamma_k \|_2, 1 \} \|F_k\|_2$. From Lemma~\ref{lemma_psi}, it follows that 
	\begin{equation}\label{norm_Gamma_k}
	\|\Gamma_k\|_2 \leq \dfrac{1}{ 2 \sqrt{ \lambda_k - \lambda_k^2 }}, \quad \text{if} \quad 0 < \lambda_k < \frac{1}{2}, \quad \text{and} \quad \|\Gamma_k\|_2 \leq 1, \quad \text{if} \quad \lambda_k \geq \frac{1}{2}.
	\end{equation}
	We split the analysis in these two cases. 
	\begin{itemize}
		\item[(a)] If $0 < \lambda_k < 1/2$, then 
		\begin{equation*}
		\max \{ \|\Gamma_k \|_2, 1 \} \leq \dfrac{1}{ 2 \sqrt{ \lambda_k - \lambda_k^2 }}.
		\end{equation*}
		From Assumption~\ref{Hip_lambda}, $\lambda_k = \|F_k\|_2^2$ and thus 
		\begin{align*}
		\max \{ \|\Gamma_k \|_2, 1 \} \|F_k\|_2 &\leq \dfrac{1}{ 2 \sqrt{ \lambda_k - \lambda_k^2 }} \|F_k\|_2 \\
		&= \dfrac{1}{ 2 \sqrt{ \|F_k\|_2^2 (1 - \|F_k\|_2^2) }} \|F_k\|_2 \\
		&= \dfrac{1}{ 2 \sqrt{ 1 - \|F_k\|_2^2 }} = \dfrac{1}{ 2 \sqrt{ 1 - \lambda_k }} \leq \frac{\sqrt{2}}{2}. 
		\end{align*}
		\item[(b)] If $\lambda_k \geq 1/2$, then $\max \{ \|\Gamma_k \|_2, 1 \} \leq 1$. 
		From the convergence of $x_k$ to $x_{\infty}$ for $k \in {\cal K}$ and continuity of $F$, it follows that 		
	there exists a constant $M_F > 0$, such that $\|F_k\|_2 \leq M_F$, for all $k \in {\cal K}$. Hence 
		\begin{equation*}
		\max \{ \|\Gamma_k \|_2, 1 \} \|F_k\|_2 \leq \|F_k\|_2 \leq M_F, \quad \forall k \in {\cal K}.
		\end{equation*}
	\end{itemize}
	Therefore, from (a) and (b), we conclude that 
	\begin{equation*}
	\max \{ \|\Gamma_k \|_2, 1 \} \|F_k\|_2 \leq \max \left\lbrace \frac{\sqrt{2}}{2}, M_F \right\rbrace, \quad \forall k \in {\cal K}.
	\end{equation*}
and 
	\begin{equation}\label{norm_d_k1}
	\|d_k\|_2 \leq \frac{1}{\sqrt{\gamma}} \max \left\lbrace \frac{\sqrt{2}}{2}, M_F \right\rbrace =: M_1, \quad \forall k \in {\cal K}.
	\end{equation}
This concludes the proof. 
\end{proof}

Now that it is proved that Algorithm~\ref{alg:LMM_armijo} generates a sequence of directions $\{d_k\}$ which is gradient-related, 
with the aid of \cite[Proposition~1.2.1]{Bertsekas1999}, the global convergence can be established 
(the reasoning in our proof is based on \cite[Theorem~3.1]{Dan2002}).

\begin{theorem}\label{prop_conv}
	Let $\{ x_k \}$ be a sequence generated by Algorithm~\ref{alg:LMM_armijo}. 
	Then, every limit point $\hat{x}$ of $\{ x_k \}$ is such that $\nabla \phi (\hat{x}) = \0$.
\end{theorem}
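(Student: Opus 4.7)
The plan is to argue by contradiction, following the template of \cite[Proposition~1.2.1]{Bertsekas1999} and adapting it to the two possible step-size branches of Algorithm~\ref{alg:LMM_armijo}. First, I would assume there is a limit point $\hat{x}$ of $\{x_k\}$ with $\nabla\phi(\hat{x}) \neq \mathbf{0}$ and extract a subsequence $\{x_k\}_{k\in\mathcal{K}} \to \hat{x}$. A preliminary observation is that $\{\phi(x_k)\}$ is non-increasing: the shortcut branch forces $\phi(x_{k+1}) \leq \vartheta^2\phi(x_k)$, while the Armijo branch forces $\phi(x_{k+1}) - \phi(x_k) \leq \nu\alpha_k\nabla\phi(x_k)^T d_k \leq 0$ (the directional derivative is non-positive since $\nabla\phi(x_k)^T d_k = -d_k^T(J_k^T J_k + \lambda_k L^T L)d_k$ and the matrix is positive definite under Assumption~\ref{Hip_nullJL}). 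Since $\phi\geq 0$, $\{\phi(x_k)\}$ converges, so by continuity its limit equals $\phi(\hat{x})$ and $\phi(x_{k+1}) - \phi(x_k) \to 0$.

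Next, I would partition $\mathcal{K}$ into indices $\mathcal{K}_1$ on which the shortcut branch is taken and $\mathcal{K}_2$ on which the Armijo backtracking is used. If $\mathcal{K}_1$ contains a sub-subsequence along which $x_k \to \hat{x}$, then passing to the limit in $\phi(x_{k+1}) - \phi(x_k) \leq (\vartheta^2 - 1)\phi(x_k)$ forces $(\vartheta^2-1)\phi(\hat{x}) \geq 0$, hence $\phi(\hat{x}) = 0$. This implies $F(\hat{x}) = \mathbf{0}$ and therefore $\nabla\phi(\hat{x}) = J(\hat{x})^T F(\hat{x}) = \mathbf{0}$, contradicting the assumption.

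Otherwise, for all sufficiently large $k \in \mathcal{K}$ the Armijo branch is active. By Proposition~\ref{prop_dk}, $\{d_k\}_{k\in\mathcal{K}}$ is bounded and $\limsup_{k\in\mathcal{K}} \nabla\phi(x_k)^T d_k < 0$. Combined with $\phi(x_{k+1}) - \phi(x_k) \to 0$ and the Armijo inequality, this forces $\alpha_k \to 0$ along $\mathcal{K}$. The trial step $\alpha_k/\eta$ must then have violated the Armijo condition for all large $k$, yielding
\[
\phi\bigl(x_k + (\alpha_k/\eta)\, d_k\bigr) - \phi(x_k) > \nu (\alpha_k/\eta)\, \nabla\phi(x_k)^T d_k.
\]
Applying the mean value theorem to the left-hand side, dividing by $\alpha_k/\eta$, extracting a further subsequence along which $d_k \to \bar d$, and passing to the limit (using $\alpha_k \to 0$, boundedness of $\{d_k\}_{\mathcal{K}}$, and continuity of $\nabla\phi$) would give $(1-\nu)\nabla\phi(\hat{x})^T \bar d \geq 0$. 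This contradicts $\nabla\phi(\hat{x})^T \bar d = \lim \nabla\phi(x_k)^T d_k \leq \limsup_{k\in\mathcal{K}} \nabla\phi(x_k)^T d_k < 0$ since $\nu \in (0,1)$.

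The main obstacle I anticipate is handling the shortcut branch correctly: one must notice that, if it is active along an infinite sub-subsequence converging to $\hat{x}$, then stationarity is obtained for free because $\phi(\hat{x}) = 0$ is forced; only once this case is isolated does the classical Bertsekas-style argument, powered by the gradient-relatedness established in Proposition~\ref{prop_dk}, apply cleanly to the remaining iterates.
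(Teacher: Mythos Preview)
Your argument is correct and follows the same overall strategy as the paper's proof---separate the shortcut branch from the Armijo branch and invoke gradient-relatedness (Proposition~\ref{prop_dk}) for the latter---but the decomposition differs slightly. The paper splits \emph{globally}: it sets $K_1 = \{k : \|F(x_k+d_k)\| \le \vartheta\|F(x_k)\|\}$ and observes that if $K_1$ is infinite then, since $\{\|F(x_k)\|\}$ is non-increasing and drops by the factor $\vartheta$ infinitely often, $\|F(x_k)\|\to 0$ for the \emph{entire} sequence, so every limit point is a zero of $F$; if $K_1$ is finite, the Armijo rule is eventually always active and \cite[Proposition~1.2.1]{Bertsekas1999} is invoked as a black box. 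You instead fix a limit point, extract the convergent subsequence, and split \emph{within that subsequence}, handling the shortcut sub-case via $\phi(x_{k+1})-\phi(x_k)\to 0$ and reproducing the Bertsekas contradiction argument inline for the Armijo sub-case. Both routes are valid; the paper's is shorter and yields the slightly stronger byproduct that infinitely many shortcut steps force $F(x_k)\to 0$ globally, while yours is self-contained and does not rely on citing \cite{Bertsekas1999}.
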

\begin{proof}
Let $K_1 = \{ k \in \mathbb{N} \mid \|F(x_k + d_k) \| \leq \vartheta \| F(x_k) \| \}$. 
If $K_1$ is infinite, it follows that $\| F(x_k) \| \rightarrow 0$, and thus any limit point $\hat{x}$ of $\{x_k\}$ is such that $F(\hat{x})=0$, hence $\nabla \phi(\hat{x})=\0$.
Otherwise, if $K_1$ is finite, let us assume, without loss of generality, that $\|F(x_k + d_k) \| > \vartheta \| F(x_k) \|$, for every $k$, 
such that the step-size is chosen to satisfy Armijo condition. Since, by Proposition~\ref{prop_dk}, the directions of Algorithm~\ref{alg:LMM_armijo} 
are gradient-related, it follows from \cite[Proposition~1.2.1]{Bertsekas1999} that any limit point $\hat{x}$ of $\{x_k\}$ is a stationary point of $\phi$. 
\end{proof}

The next result, which is similar to \cite[Theorem~3.1]{Yamashita} shows that if a limit point $\hat{x}$ of Theorem~\ref{prop_conv} is such that $F(\hat{x})=\0$, 
then $\alpha_k=1$ for every sufficiently large $k \in {\cal K}$ and $\{ \dist(x_k,X^*) \}$ converges to zero quadratically.

\begin{theorem}
	Assume that Assumptions~\ref{Hip_nullJL}-\ref{Hip_lambda} hold. 
	Let $\{ x_k \}$ be generated by Algorithm~\ref{alg:LMM_armijo}. 
	If a limit point $x^*$ of $\{x_k \}$ is such that $F(x^*)=0$, then the sequence $\{ \dist(x_k,X^*) \}$ converges to 0 quadratically, 
	given Assumption~\ref{Hip_J} and Assumption~\ref{Hip_error_bound} for this particular $x^*$ hold. 
\end{theorem}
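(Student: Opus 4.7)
The plan is to reduce this to Theorem~\ref{theorem_1} by showing that, from some iteration $k_0$ onward, the line-search always accepts $\alpha_k=1$, so the tail of the algorithm coincides with the unit-step LMMSS sequence and inherits its quadratic rate.

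The starting observation is that $\{\|F(x_k)\|_2\}$ is non-increasing along Algorithm~\ref{alg:LMM_armijo}: the ``if'' branch enforces $\|F(x_{k+1})\|_2 \leq \vartheta\|F(x_k)\|_2$ directly, while the Armijo branch yields $\phi(x_{k+1})\leq\phi(x_k)$ because $\nabla\phi(x_k)^T d_k = -d_k^T(J_k^TJ_k+\lambda_k L^TL)d_k \leq 0$ by positive definiteness (Assumption~\ref{Hip_nullJL} with $\lambda_k>0$). Combined with $F(x^*)=\0$ and continuity of $F$, this monotonicity forces $\|F(x_k)\|_2 \to 0$ along the whole sequence.

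The crux is to bound $\|F(x_k+d_k)\|_2$ in terms of $\|F(x_k)\|_2$ whenever $x_k$ is near $x^*$. Adding and subtracting $J_kd_k$ in the residual, Assumption~\ref{Hip_J} together with Lemma~\ref{lemma_1} gives
\begin{align*}
\|F(x_k+d_k)\|_2 &\leq \|F(x_k+d_k)-F(x_k)-J_kd_k\|_2 + \|J_kd_k+F_k\|_2 \\
&\leq c_1\|d_k\|_2^2 + c_4\dist(x_k,X^*)^2 \\
&\leq (c_1c_3^2+c_4)\dist(x_k,X^*)^2,
\end{align*}
and substituting $\dist(x_k,X^*) \leq \|F(x_k)\|_2/c_2$ from Assumption~\ref{Hip_error_bound} in one factor produces
\[
\|F(x_k+d_k)\|_2 \leq \frac{c_1c_3^2+c_4}{c_2}\,\dist(x_k,X^*)\,\|F(x_k)\|_2.
\]
Hence the unit-step test passes at $x_k$ as soon as $\dist(x_k,X^*) \leq \epsilon^* := \vartheta c_2/(c_1c_3^2+c_4)$.

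To close the argument, I would pick $k_0$ from the subsequence indices $\mathcal{K}$ converging to $x^*$ large enough that $x_{k_0} \in B(x^*, r')$ with $r' := \min\{r,\epsilon^*\}$, where $r$ comes from Lemma~\ref{lemma_3}. Then I would consider the virtual unit-step sequence $\{y_j\}$ with $y_0 = x_{k_0}$ and $y_{j+1}=y_j+d_{y_j}$: by Theorem~\ref{theorem_1} this sequence lies in $B(x^*,\delta)$ and $\dist(y_j,X^*) \to 0$ quadratically, and an induction based on Lemma~\ref{lemma_2} together with $r' \leq 1/(2c_5)$ shows $\dist(y_j,X^*) \leq r' \leq \epsilon^*$ for every $j$. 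Therefore the unit-step test passes at every $y_j$, so by induction on $j$ the algorithm's iterates satisfy $x_{k_0+j}=y_j$, and the announced quadratic convergence of $\{\dist(x_k,X^*)\}$ follows from Theorem~\ref{theorem_1}. The main obstacle is precisely this self-consistency step: one must fix $r'$ small enough so that quadratic decrease, membership in $B(x^*,\delta)$, and the unit-step test all reinforce one another, which is what motivates the choice $r' = \min\{r,\epsilon^*\}$.
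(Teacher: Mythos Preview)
Your proposal is correct and follows essentially the same strategy as the paper: show that from some $k_0$ onward the unit-step test $\|F(x_k+d_k)\|_2\le\vartheta\|F(x_k)\|_2$ passes and then invoke Theorem~\ref{theorem_1}. The only cosmetic difference is in the estimate for $\|F(x_k+d_k)\|_2$ --- the paper routes through~\eqref{F_approx} and Lemma~\ref{lemma_2}, arriving at the threshold $\|F(x_{k_0})\|_2\le c_2^2\vartheta/(c_5 c_F)$, while you use the linearization error and Lemma~\ref{lemma_1} directly --- and your explicit induction on the virtual unit-step sequence is in fact more carefully written than the paper's terse appeal to Lemma~\ref{lemma_3}.
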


\begin{proof}
	Since the limit point $x^*$ of $\{ x_k\}$ is such that $F(x^*)=0$, then there exists  $k_0 \in \mathbb{N}$ such that 
	\begin{equation}\label{theorem_2_aux}
	\|F(x_{k_0})\|_2 \leq \frac{c_2^2 \vartheta}{c_5 c_F}
	\end{equation}
	and 
	\begin{equation*}
	\|x_{k_0} - x^*\|_2 \leq r,
	\end{equation*}
	where 
	$r$ is the constant specified in Lemma~\ref{lemma_3}. 
Since $\{ \| F(x_k) \| \}$ is non-increasing, and from Lemma~\ref{lemma_3}, 
for every $k \geq k_0$, we have 
	\begin{equation}\label{theorem_2_auxB}
	\|F(x_{k})\|_2 \leq \frac{c_2^2 \vartheta}{c_5 c_F} \quad \text{ and } \quad x_k \in B(x^*, \delta).
	\end{equation}

	For $k \geq k_0$, let $\bar{x}_{k+1} \in X^*$ be such that $\|x_{k+1} -\bar{x}_{k+1} \|_2 = \dist (x_{k+1},X^*)$. 
	Then, from \eqref{F_approx}, Assumption~\ref{Hip_error_bound}, Lemma~\ref{lemma_2} and \eqref{theorem_2_auxB}, it follows that 
	\begin{align}\notag
	\|F (x_{k+1})\|_2 &= \|F (x_{k+1}) - F(\bar{x}_{k+1})\|_2 \leq c_F \|x_{k+1} -\bar{x}_{k+1} \|_2\\ \notag
	&= c_F \dist (x_{k+1},X^*) \leq c_5 c_F \dist (x_{k},X^*)^2\\ 
	&\leq \frac{c_5 c_F \|F (x_k)\|_2}{c_2^2} \|F (x_k)\|_2 \leq \vartheta \|F (x_k)\|_2, \label{theorem_2_aux2}
	\end{align}
	which proves that $\alpha_k = 1$ for every $k \geq k_0$. 
	Hence, the quadratic convergence of $\{ \dist (x_k, X^*) \}_{k \geq k_0}$ to zero follows from Theorem~\ref{theorem_1}.		
\end{proof}

\section{Application to parameter identification in heat conduction problems}\label{sec:app}

	In this section, we illustrate the effectiveness  of LMMSS in recovering 
	physical parameters  in heat conduction problems based on  temperature measurements.
	Two inverse  problems are considered, namely, the problem of  reconstructing a so-called
	perfusion coefficient in a 2D Pennes' bioheat model (Section~\ref{sec:perf}), and the problem of  reconstructing the thermal conductivity in a 2D heat conduction model (Section~\ref{sec:thermal}). 
	
	We remark that due to noisy measurements the corresponding nonlinear least-squares problems may not have zero residue ($X^* = \emptyset$). 
	In this case, the local convergence theory of Section~\ref{sec:local} does not apply. 
	However, according to Section~\ref{sec:global}, the global convergence of Algorithm~\ref{alg:LMM_armijo} still holds, meaning that limit points are stationary for the sum-of-squares function. 

	Furthermore, in the presence of noise, a minimizer of the residue does not always correspond to the expect solution of the inverse problem and thus it is important to know when to stop the iterations of LMMSS (or LMM) before the quality of the iterates deteriorates.  
	In the following subsections we explain how to handle this issue by using a discrepancy principle  \cite{Morozov1984}  
	and show that LMMSS with an appropriate choice of $L$ leads to better solutions than the classic choice $L = I_n$ used in LMM.

	\subsection{Reconstruction of 2D perfusion coefficient}\label{sec:perf}

	We consider the application of LMMSS to the estimation of a blood perfusion coefficient 
	based on the combination of clinical temperature measurements with a 
	mathematical model for
	heat transport proposed by Pennes~\cite{pennes} and referred to as bioheat model. The 
	2D bioheat model in dimensionless form  involves a  
	partial differential equation (PDE)
	
	\begin{equation}
		U_t-\Delta U+P_fU=G,\quad 0<x<1, \; 0<y<{\rm L},\; t>0\label{eq1},
	\end{equation}
	supplemented by boundary and initial conditions
	\begin{eqnarray}
		&&U_{x}=0, \quad x=0,\; 0<y<{\rm L},\;t>0,\label{eq2}\\
		&&U_{x}=0,  \quad x=1,\; 0<y<{\rm L},\;t>0,\label{eq3}\\
		&&U_y=B(U-U_{\infty}),\quad y=0,\; 0<x<1,\; t>0, \label{eq4}\\
		&&U=0, \quad y={\rm L},\; 0<x<1,\; t>0,\label{eq5}\\
		&& U=U_0,\quad 0<x<1,\; 0<y<{\rm L},\; t=0\label{eq6}.
	\end{eqnarray}

	\noindent 
	In this model, $U(x,y,t)$ is the local tissue temperature,  $P_f(x,y)$ is the so-called 
	blood perfusion coefficient,  
	$G=G(x,y,t)$ is a source term which stands for metabolic and spatial heat generation, $B$ is the Biot
	number, $U_\infty$ is the environmental temperature  and $U_0=U_0(x,y)$
	denotes the initial temperature.  
	The boundary conditions (\ref{eq2})-(\ref{eq5})
	include  prescribed temperature in
	the upper skin surface (the boundary $y={\rm L}$), adiabatic conditions
	(in the boundaries $x=0$ and $x=1$) and convective heat transfer
	between the tissue and an adjoint large blood vessel (in the
	boundary $y=0$), as displayed in Fig.~\ref{region}.
	\begin{figure}
		\caption{Domain for perfusion estimation.}
		\centerline{
			\scalebox{.35}{\includegraphics{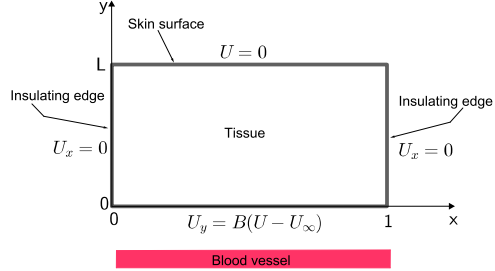}}
		}
		\label{region}
	\end{figure}
	
	The  problem under consideration consists of estimating the perfusion coefficient
	$P_f$  using the bioheat model and input data in the form of temperature measurements.
	In the approach used here, the bioheat model is transformed into a time-dependent semi-discrete system of ordinary differential equations
	involving perfusion coefficient values as a set of unknown 
	parameters, and the parameter estimation problem  is solved through LMMSS
	together with some regularization technique to stabilize the iterates.  For this we consider $(n+1)^2$ mesh points of the form $(x_i,y_j)$, the Chebyshev mesh, based on  Gauss-Lobatto points
	\begin{equation*}
		x_i = \frac{1}{2}\left[1-\cos \left( \frac{i\pi}{n} \right)\right] \quad \text{e} \quad y_j = \frac{1}{2}\left[1-\cos \left( \frac{j\pi}{n} \right)\right], \quad i,j=0,\dots,n,
	\end{equation*}
	and we use the Chebyshev pseudospectral method (CPM) to discretize  spatial derivatives. This yields the semi-discrete model,
	\begin{equation}\label{ivpn}
		\left \{ \begin{array}{l}
			{\sf U}'(t)= {\sf A({\bf p})}{\sf U}(t) +  {\sf S}(t),\;\; t>0\\
			{\sf U}(0) =  U_0, \end{array} \right .  \end{equation}
	where ${\bf p}$ is a vector that contains values of the perfusion coefficient along the mesh, 
	${\sf A}({\bf p})$ is a square matrix
	that depends on ${\bf p}$ and  ${\sf S}(t)$ is a  vector valued function that incorporates information about the source term $G$ and boundary conditions.
	The reconstruction of the perfusion coefficient will be
	made pointwise by solving a nonlinear least squares problem with objective 
	function 
	\begin{equation*}
		\phi(\textbf{p}) =
		\frac{1}{2} \|{\sf U}(\textbf{p}) - \widetilde{\sf U} \|_2^2 \doteq
		\dfrac{1}{2} \sum_{k=1}^N \sum_{\ell=1}^M \| U(\ell,t_k,\textbf{{p}}) - \widetilde{U}(\ell,t_k) \|^2,
	\end{equation*}
	where $U(\ell,t_k,\textbf{{p}})$ denotes computed temperature values as functions 
	of parameter $\textbf{p}$ at prescribed locations inside the domain  at time $t_k$, and $\widetilde{U}(\ell,t_k)$ are measured temperature data at the same locations. As usually seen in inverse heat conduction problems,  we will assume that available data and exact data satisfy 
	$$\widetilde {\sf U} = {\sf U} + {\bf e},$$
	where ${\sf U}$ contains temperature values calculated by a second order predictor-corrector method that combines a Runge-Kutta method as predictor and Crank-Nicolson method as corrector, and 
	${\sf e}$ is a vector representing  inaccuracies. 
	For a complete analysis on the discretization procedure of the bioheat model we refer the reader to Bazán {\rm et. al}~\cite{Bazan17}.
	
	Jacobian matrices $J_k$ required along the minimization process are computed by solving  the so-called \textit{sensitivity problem}, where each column of $J_k$ comes from  solving an initial value problem (IVP) very similar to (\ref{ivpn}) involving  highly sparse matrices~\cite{Bazan17}. Moreover, since the nonhomogeneous terms for all IVPs are linearly independent, see \cite[Section 3.1]{BBazanLuchesi}, it can be proved that the Jacobian is full column rank. Thus, irrespective of the chosen 	scaling matrix $L$, 	the assumption ${\cal N}(J_k)\cap {\cal N}(L) = \{\bf 0\}$ is satisfied and hence the LMMSS iterates are well defined.
	
	As for  the scaling matrix, we choose 
	$L = {\cal L}_i$,  with
	
	\begin{equation}\label{scaling_L_i}
		\mathcal{L}_i = \left[ \begin{array}{c}
			I_{n}\otimes L_i(n+1)\\ 
			L_i(n) \otimes I_{n+1}
		\end{array}  \right],  \quad i = 1,2,3,
	\end{equation}
	where $\otimes$ represents the Kronecker product (or tensor product), 
	\begin{align*}
		\small
		L_1(n) &= \left[ \begin{array}{cccc}
			-1& 1 &  &  \\ 
			& \ddots & \ddots &  \\ 
			&  & -1 & 1
		\end{array}  \right] \in \R^{(n-1)\times n},  \\
		L_2(n) &= \left[ \begin{array}{ccccc}
			1& -2 & 1 &  &\\ 
			& \ddots & \ddots &\ddots & \\ 
			& & 1 & -2 & 1
		\end{array}  \right] \in \R^{(n-2)\times n},\\
		L_3(n) & = \left [\begin{array}{ccccccc} 
			-1 & 3& -3 & 1 &  & &\\
			& -1 & 3& -3 & 1 &  & \\
			& & \ddots & \ddots & \ddots &\ddots &\\
			& & & -1 & 3& -3 & 1\\
		\end{array}\right]\in \R^{(n-3)\times n},
	\end{align*}
	with $L_1$, $L_2$ and $L_3$ representing  discrete versions of the first, second  and third order derivative operators, respectively, widely used, for example, in image reconstruction problems and other applications~\cite{BazaBor,Bazan17,Hansen2006}. {By the way, ${\cal L}_i$ is singular as 
		\begin{align*}
			{\cal N}(L_1(n)) &=\text{span}\{[1,1,\dots,1]^T\},\\
			{\cal N}(L_2(n)) &= \text{span}\{[1,1,\dots,1]^T, [1,2,\dots,n]^T\},\\
			{\cal N}(L_3(n)) & =\text{span}\{[1,1,\dots,1]^T, [1,2,\dots,n]^T, [1^2,2^2,\dots,n^2]^T\}.
		\end{align*}
		The purpose of considering these 2D derivative operators
		into the numerical treatment of  this inverse problem is to illustrate  the impact of them on the numerical reconstruction when the exact
		solution is known to be smooth. In order to proceed, observe  that the estimation problem involves $(n+1)\times  n$ unknowns.
		Here we consider $n = 14$ (which means we deal with an optimization problem involving 210 unknowns), 
		$M=63$ measurements at internal locations of the domain and $N=8$ time levels. 
		Doing so, the Jacobian $J_k$ is of order $504\times 210$; for details about the choice of measurement temperature locations the reader is referred to Baz\'an {\rm et. al}~\cite{Bazan17}.

		In what follows, to illustrate the  impact that the regularizer choice causes on 
		the quality of the reconstructions from noisy data, we will conduct several numerical experiments intended to estimate the perfusion coefficient of a bioheat model with exact solution given by
		\begin{equation}\label{solpf2d}
			U(x,y,t)= \frac{ e^{ -\pi^2t}}{2 (B+{ \rm L}) }[((B+{ \rm L})y^2-By-{ \rm L}) \cos(\pi x)]+\frac{ BU_{\infty}}{ B+{\rm L}}({ \rm L}-y),
		\end{equation}
		for 
		$ B = 0.015, \; U_{\infty} = 0.001,\; {\rm L}=  1,$ and exact {\it smooth}  perfusion coefficient
		\begin{equation}\label{pf}
			P_f(x,y) =\sin(\pi xy).
		\end{equation}

		To this end we use temperature measurements with the noise vector {\bf e} 
		containing zero mean Gaussian random numbers scaled  so that 
		$\| {\bf e} \|/\| {\sf U}\| = \| {\sf U}-\widetilde{\sf U}\|/\| {\sf U}\|={\rm NL}$, where {\rm NL} 
		denotes the noise level in the data, and conduct the reconstruction process for several values of {\rm NL}. 
		
		It is worth mentioning that when solving inverse problems through iterative methods, 
		it is of fundamental importance to stop the iterations before the noise in the data starts to deteriorate their quality. 
		We achieved this objective through the discrepancy principle \cite{Morozov1984}: LMMSS iterations stop at the first index $j$ such that
		\begin{equation}
			\label{dp} \|{\sf U}({\bf p}^{(j)})-\widetilde{\sf U}\|\le \tau \|{\bf e}\|,
		\end{equation}
		where $\tau\ge 1$ is a safeguard parameter. 
		For our numerical experiments we take 	$\tau=1.05$, a slightly tight parameter as measure of good fit between the continuous problem and the discretization.  
		
		In order to assess the accuracy of recovered quantities, we use the relative error 	$${\rm RE}({\bf p}^{(j)}) = \|{\bf p}^{(j)}-{\bf p}\|/\|{\bf p}\|$$
		along interior points of the mesh, and the temperature reconstruction error defined by
		\begin{equation*}
			{\rm TRE} = \|{\sf U}({\bf p}^{(j)})-{\sf U}\|/\|{\sf U}\|.
		\end{equation*}
		
		We  report in Table~\ref{table:perfusion} numerical results  obtained with LMMSS using the singular scaling matrices  ${\cal L}_i$ from \eqref{scaling_L_i}, including the case $L = I$, for ${\rm NL} = 0.0001$ and  ${\rm NL} = 0.001$. 
		 
	Before commenting on them, let us highlight Figure~\ref{fig:relerr} where we monitor the relative error and absolute temperature error (which is proportional to the square root of the objective function) along the iterations of LMMSS with two different choices of $L$. 
	While the plot on the right  shows a monotone decay of objective function (residual norms), 
	which is in accordance with the theoretical properties of LMMSS, 
	on the left we see that, for $L={\cal L}_2$ (green curve), the relative error in the iterates ${\bf p}^{(j)}$ tends to deteriorate after the discrepancy principle is achieved (the threshold is represented by the red horizontal line) 	whereas for $L=I_n$ (blue curve) it seems to stabilize, but in a higher level. 
	Concerning the residual norm (right) we observe that stopping criterion \eqref{dp} is reached earlier for $L = {\cal L}_2$. 
		
		\begin{figure}[h]
			\centering
			\caption{Relative error ${\rm RE}({\bf p}^{(j)})$ (left)
				and  absolute temperature error (right) as functions of the iteration number, from data with  ${\rm NL} = 0.001$. 
				The blue dashed line corresponds to $L = I_n$ and the green dotted line to $L={\cal L}_2$. 
				The horizontal red line on the right represents the value $\tau\|{\bf e}\|=0.0039.$}
			\includegraphics[scale=0.92]{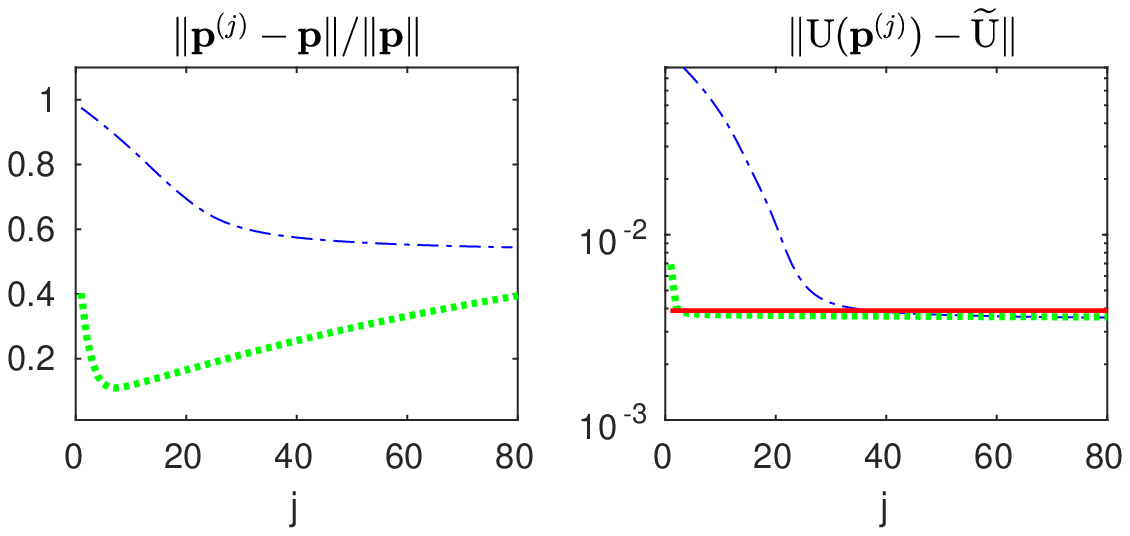} 
			\label{fig:relerr}
		\end{figure}
		
		As for the quality of the reconstruction of the perfusion coefficient measured by the relative errors in Table~\ref{table:perfusion}, we note that it ranges from acceptable to excellent when $L={\cal L}_i$, $i=1,2,3$, which is  also illustrated in Figure~\ref{fig:pf2d} where we present surface plots of the reconstructions.
		Notice that this fact should not be seen as a surprise, since the sought solution is smooth. Also  note that the results obtained with classic LMM (Algorithm~\ref{alg:LMM_armijo} using $L^T L = I_n$) are inferior both in terms of the quality of the reconstruction	 and the iterations count when compared to the other cases.	Other than that, what really surprises is the superior quality of the reconstructed temperature compared to the quality of the reconstructed perfusion coefficient.
		Indeed, the  occurrence of 
		small temperature reconstruction errors (TREs) simply highlight the ill-posed nature of the problem. That is,  the estimation problem
		allows for small temperature reconstruction errors in all cases, even with estimates of the perfusion coefficient differing greatly from the exact one.

		\begin{table}[h]
			\centering
			\caption{Results for perfusion coefficient recovery for two noise levels and $n+1=15$. }
			{\small 
				\begin{tabular}{lllllc}
					\toprule
					NL & {\bf Method} & $L$ & ${\rm RE}({\bf p}^{(j)})$  & TRE & \textbf{Iterations} \\ \midrule
					\multirow{4}{*}{0.001} & \textbf{LMM} & $I$  & 0.5860   & 4.0578e-04  & 36 \\
					& \textbf{LMMSS} & ${\cal L}_1$  & 0.3437  & 3.0196e-04  &  6 \\
					& \textbf{LMMSS} & ${\cal L}_2$  & 0.1718 &   2.3084e-04  &  3 \\
					& \textbf{LMMSS} & ${\cal L}_3$  & 0.1403 &   2.3898e-04  &  2 \\
					\midrule
					\multirow{4}{*}{0.0001} & \textbf{LMM} & $I $  & 0.5333 & 5.4241e-04  & 44 \\
					& \textbf{LMMSS} & ${\cal L}_1$  & 0.1539   &  3.7233e-05  &  7 \\
					& \textbf{LMMSS} & ${\cal L}_2$  & 0.0990   &  3.1243e-05  &  4 \\
					& \textbf{LMMSS} & ${\cal L}_3$  & 0.0516    &  2.6821e-05  &  3 \\
					\bottomrule
			\end{tabular}}
			\label{table:perfusion}
		\end{table}
		
		\begin{figure}[h]
			\centering
			\caption{Exact and recovered 2D perfusion coefficient. Noisy data correspond to  $\textup{NL} = 0.001$ (4 regularizers) and $\textup{NL} = 0.0001$ only for 
				${\cal L}_3$ (last plot from left to right). }
			\includegraphics[scale=0.7]{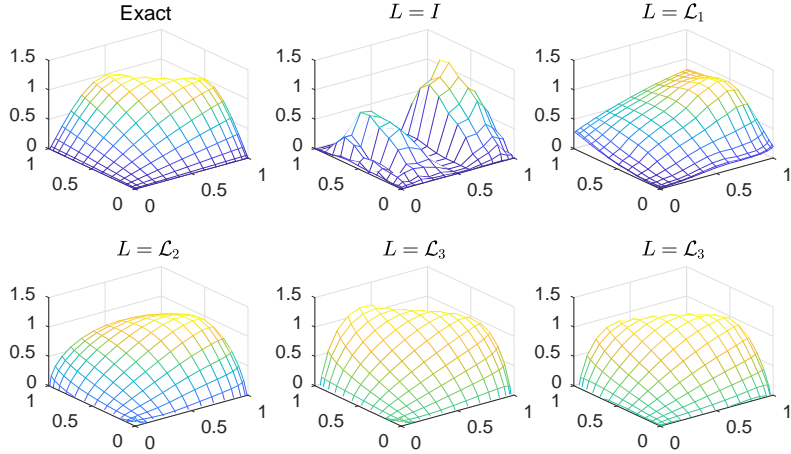} 
			\label{fig:pf2d}
		\end{figure}

	}

	\subsection{Reconstruction of thermal conductivity}\label{sec:thermal}

	In this section, we illustrate the effectiveness of LMMSS in a thermal conductivity reconstruction problem aiming at recovering  the thermal conductivity incorporated as a parameter in a heat conduction model based on temperature measurements. For this we consider a two-dimensional
	heat conduction model in the finite domain  $\Omega \times [0,t_f]$, $t_f>0$, with $\Omega = (0,l_1)\times (0,l_2) \subseteq \R^2$, $l_1, l_2 >0$, described by 
	the partial differential equation (PDE) {\small
		\begin{equation}\label{eq:heat}
			C(x,y)\frac{\partial u}{\partial t} (x,y,t)=\nabla \cdot [{ K}(x,y)\nabla 
			u(x,y,t)] -q(x,y) u(x,y,t)+ g(x,y,t),
	\end{equation}}
	boundary conditions {\small 
		\begin{align}
			\label{eq:boundary1}
			-k_{11}(0,y)\frac{\partial u}{\partial x}(0,y,t) +h_1(y)(u(0,y,t)-f_1(y,t)) = 0, &\quad \text{on} \quad x = 0,
			\ y \in (0,l_2),\\
			\label{eq:boundary2}
			k_{11}(l_1,y)\frac{\partial u}{\partial x}(l_1,y,t) +h_2(y)(u(l_1,y,t) -f_2(y,t)) = 0,  &\quad \text{on} \quad
			x = l_1, \ y \in (0,l_2),\\
			\label{eq:boundary3}
			-k_{22}(x,0)\frac{\partial u}{\partial y}(x,0,t) +h_3(x)(u(x,0,t) -f_3(x,t)) = 0,   &\quad \text{on} \quad y = 0,
			\ x \in (0,l_1),\\
			\label{eq:boundary4}
			k_{22}(x,l_2)\frac{\partial u}{\partial y}(x,l_2,t) +h_4(x)(u(x,l_2,t)  -f_4(x,t)) = 0,   &\quad \text{on} \quad 
			y=l_2, \ x \in (0,l_1),
	\end{align}}
	for all $t\in (0,t_f]$, and initial condition 
	\begin{equation}\label{eq:Initial}
		u(x,y,0) = u_0 (x,y) \quad \text{on} \ \Omega,
	\end{equation}
	where $h_i$, $i = 1, \dots, 4$, are known as heat transfer functions, $f_i$, $i = 1,\dots,4$, heat flux functions and $u_0(x,y)$ is the initial temperature distribution. 
	The other variables stand for heat capacity $C(x,y)>0$, reaction term $q(x,y) \geq 0$, source term $g(x,y,t)$, temperature values $u(x,y,t)$ and thermal conductivity 
	\begin{equation}\label{eq:conductivity}
		{ K}(x,y) = \left[ \begin{array}{cc}
			k_{11} (x,y) & 0 \\ 
			0 & k_{22} (x,y)
		\end{array}  \right],
	\end{equation}
	{where $k_{11}$ and $k_{22}$ are positive continuous functions.}
	
	In the {\it direct problem} associated with the heat conduction model described above, given the thermal conductivity $K$ and the other physical parameters, 
	the goal is to determine the temperature $u(x,y,t)$.  As for the recovery problem, it can be stated as follows:
	given a set of temperature measurements inside the spatial domain, we want to recover estimates of the parameter $K$ considering the other parameters involved in the model as input data. For a wide range of applications of the conductivity recovery problem on industry and engineering the reader is referred to~\cite{Alessandrini17, AltintasB, Luchesi, Komanduri, Ozisik1993, Alifanov12}.
	
	Clearly, since $K$ depends nonlinearly on $u$, what we face is a nonlinear inverse problem. 
	Several parameter estimation methods which combine  numerical methods for PDEs such as the finite difference method (FDM), 
	the finite element method (FEM) or  the boundary element method (BEM), and optimization techniques 
	have been developed to address the conductivity estimation problem, e.g. 
	\cite{Mera00, Mahmood19, Pasdunkorale03, Alessandrini17, Cao2018}. In all cases
	the direct problem must be solved many times and therefore choosing a method to solve PDEs is crucial.
	The conductivity recovery method that we will illustrate in this section 
	follows this line of action. In fact, it combines an efficient method to solve the 
	direct problem (\ref{eq:heat})--(\ref{eq:Initial}) based on the
	Chebyshev pseudospectral method together with the Crank-Nicolson (CN) method as 
	time integrator, and LMMSS as optimization tool for estimating the desired parameter. 
	Our decision to choose such a PDE solver  is supported by the fact that the former is 
	is know to produce highly accurate
	approximations of spatial derivatives~\cite{BBazanLuchesi, B_Luchesi_Bazan}, while 
	the latter is second order 
	accurate and absolutely stable \cite{Crank_Nicolson_1947}.
	
	As in the previous application, we consider $(n+1)^2$ mesh points based on  Gauss-Lobatto points
	and use CPM to discretize  spatial derivatives on $\Omega$. This  transforms  model (\ref{eq:heat})--(\ref{eq:Initial}) into a 
	time-dependent system of ordinary differential equations (ODEs), the semi-discrete model,
	\begin{equation}\label{semi-disc}
		\left\{ \begin{aligned}
			& \textbf{C} \textbf{v}'(t) = \textbf{Av}(t) + \textbf{S}(t), \ t>0 \\ 
			&\textbf{v} (0) = \textbf{u}^0
		\end{aligned} \right.,
	\end{equation}
	where, roughly speaking, $\bf A$ contains spatial discretization terms 
	involving conductivity values $K(x_i,y_j)$, $\bf S$ contains terms coming from the source $g$ and boundary information, $\bf C$ contains discrete heat capacity values, and 
	$\textbf{u}^0$ consists of initial temperature values. Then, in order to  determine temperature values at several time steps we
	use the Crank-Nicolson method.

	As for the inverse problem, it is formulated
	as a nonlinear least squares problem involving temperature measurements as input data, usually contaminated with errors caused by imprecision in physical experiments or by 
	external noise. The functional to be minimized is described by
	\begin{equation*}
		\phi(\textbf{k}) =
		\frac{1}{2} \|{\sf u}(\textbf{k}) - \widetilde{\sf u} \|_2^2 \doteq
		\dfrac{1}{2} \sum_{k=1}^N \sum_{j=0}^n \sum_{i=0}^n \| u(x_i,y_j,t_k,\textbf{{k}}) - \widetilde{u}(x_i,y_j,t_k) \|^2,
	\end{equation*}
	{where 
		\begin{equation*}
			\textbf{k} = \left[ \begin{array}{c}
				\textbf{k}_{11}	\\
				\textbf{k}_{22}
			\end{array} \right] \in \R^{2(n+1)^2}
		\end{equation*}
		contains unknowns $k_{11}(x_i,y_j)$ and $k_{22}(x_i,y_j)$ along the mesh points
		$(x_i,y_j)$,
		${\sf u}(\textbf{k})$ denotes an $[(n+1)^2N]\times 1$ block  vector containing  
		solutions of the direct problem for given ${\bf k}$,  with the $k$-th  block 
		being associated with  time level $t_k$, $k=1,\dots, N$, and
		$\widetilde{\sf u}$ is a vector of measured  temperature data at the $N$ time steps
		with the same structure as ${\sf u}(\textbf{k})$.}
	Jacobian matrices $J_k$ required along the minimization process are computed, as in the previous subsection, by solving  the sensitivity problem. For details the reader is referred to \cite[Sections 3.1 and 4]{B_Luchesi_Bazan} and \cite[Section 3.1]{BBazanLuchesi}.
	
	Here, we choose  {scaling} matrices 
	$L = \tilde{\cal L}_i$, 
	\begin{equation}\label{nscaling_L_i}
		\tilde{\cal L}_i = \left[ \begin{array}{c}
			I_{n+1}\otimes L_i(n+1)\\ 
			L_i(n+1) \otimes I_{n+1}
		\end{array}  \right],  \quad i = 1,2.
	\end{equation}
	which differs
	slightly from that used in the previous application (see \eqref{scaling_L_i}) because of the boundary conditions, but following the same ideas.  
	
	As in the previous example, we will assume that the received data $\widetilde{\sf u}$ and the exact data ${\sf u}$ satisfy
	\begin{equation*}
		\widetilde{\sf u}=  {\sf u}+{\sf e},
	\end{equation*}
	where {${\sf e}$} contains zero mean Gaussian random numbers scaled  so that  $\norm{\widetilde{\sf u} - {\sf u}}/\norm{\sf u} = \textup{NL}$, 
	where {\rm NL} denotes the noise level. 
	In the case of $\textup{NL} = 0$, we stop the iterations at ${\bf k}^{(j)}$ if the gradient of the objective function is small, i.e., 
	\begin{equation*}
		\norm{\nabla \phi ({\bf k}^{(j)})} < \varepsilon, 
	\end{equation*}
	or if the relative variation of the iterates is small, 
	\begin{equation*}
		\frac{\norm{{\bf k}^{(j)} - {\bf k}^{(j-1)}}}{ \norm{{\bf k}^{(j)}}} < \varepsilon, 
	\end{equation*}
	both with $\varepsilon = 5\times 10^{-4}$ in all numerical examples. Otherwise, if $\textup{NL} \neq 0$, the iterations stop using the discrepancy principle \cite{Morozov1984} as in (\ref{dp}) 
	for $\tau = 1.1$ in the numerical experiments of this section.
	
	In the next subsections we will discuss examples and  compare results obtained 
	with classic LMM and LMMSS using data with and without noise. The accuracy of recovered conductivities
	is measured by relative errors, 
	\begin{equation*}
		{\rm RE}({\bf k}_{11}^{(j)})=\|{\bf k}_{11}^{(j)} - {\bf k}_{11}\|_2/\|{\bf k}_{11}\|_2   \quad \text{and} \quad {\rm RE}({\bf k}_{22}^{(j)})=\|{\bf k}_{22}^{(j)} - 
		{\bf k}_{22}\|_2/\| {\bf k}_{22}\|_2,
	\end{equation*}
	as well as by temperature reconstruction errors \text{TRE} defined similarly as in the perfusion estimation problem. 
	
	To rigorously evaluate the results obtained in the numerical simulations, two test problems will be solved 30 times using different noisy temperature data. We will report average relative errors of reconstructed quantities, including the maximum number of iterations of all instances, denoted by MI, spent until some convergence criterion is met.
	In both  examples we consider $N = 10$ equally spaced time stages in $[0,t_f]$ and 
	$n+1 = 16$ grid points in each direction, which brings 256 conductivity unknowns $K(x_i,y_j)$ on $\Omega $. 

	\subsubsection{Example 1: isotropic conductivity}
	
	
	Extracted from Mahmood and Lesnic \cite{Mahmood19}, this example considers as sought solution $K(x,y)$ an isotropic conductivity, that is,
	\begin{equation*}
		k_{11} (x,y) = k_{22} (x,y) := k(x,y) = \dfrac{1+x+y}{12},
	\end{equation*}
	the problem being defined on $\Omega = (0,1) \times (0,1)$, with final time $t_f = 1$. The remaining parameters are:
	\begin{gather*}
		C(x,y) = 1,\quad g(x,y,t)=0,\quad f_1(y,t)= -1,\quad f_2(y,t)= 1,\\
		f_3(x,t) = -1, \quad f_4(x,t)=1, \quad \text{and} \quad h_i \equiv 0,\  i=1,\dots, 4,
	\end{gather*}
	and initial condition $u_0(x,y)=0$. For this test problem the exact solution
	$u(x,y,t)$ is not  available and  temperature data for testing LMMSS 
	should be generated artificially. 
	We make this by 
	applying a finite element method (FEM) to the model (\ref{eq:heat})--(\ref{eq:Initial}),  using  a refined mesh so as to build  a highly accurate approximate solution $u$,  which is then   interpolated to the Chebyshev mesh 
	to generate the sought ``exact'' solution. For more details on this procedure, see \cite[Section 3.2]{B_Luchesi_Bazan}.

	Numerical results  obtained from data with {$\textup{NL}=0.001$ (i.e. data
		with relative noise level  0.1\%)},  scaling matrices 
	$L = I$ and $L = \tilde{\cal L}_i$ as in (\ref{nscaling_L_i}) are shown in
	Table \ref{table:plane} and
	Figure \ref{fig:plane_iso}. In all cases, for the  initial guess we choose  $k^0(x,y) = 1/4$. As it can be seen, both scaling matrices used in this experiment
	yield good quality reconstructions, with reconstruction errors that do 
	not exceed 10\% of relative error but with an advantage in favor of LMMSS in terms of accuracy and computational effort.

	\begin{table}[h!]
		\centering
		\caption{Results of one instance for different scaling matrices.} 
		{\small
			\begin{tabular}{llll}
				\toprule
				\textbf{Scaling matrix} & $I$  & $\tilde{\mathcal{L}}_1$ & $\tilde{\mathcal{L}}_2$\\
				\midrule
				${\rm RE}({\bf k}^{(j)})$ & 0.0946  & 0.0128  &  0.0134  \\
				MI & 7 & 4 & 4  \\
				\bottomrule
		\end{tabular}}
		\label{table:plane}
	\end{table}
	
	Also note the visual cohesion and smoothness displayed in in Figures \ref{fig:planed} and \ref{fig:planee} (effect of the derivative operator), contrasting with the presence of ``peaks''  in the reconstruction obtained with $L=I$ (classic LMM) displayed in Figure \ref{fig:planeb} .
	
	\begin{figure}[ht!]
		\centering
		\caption{Comparison between iterates generated by LMM and LMMSS.}
		\subfloat[Exact]{\scalebox{.53}{\includegraphics{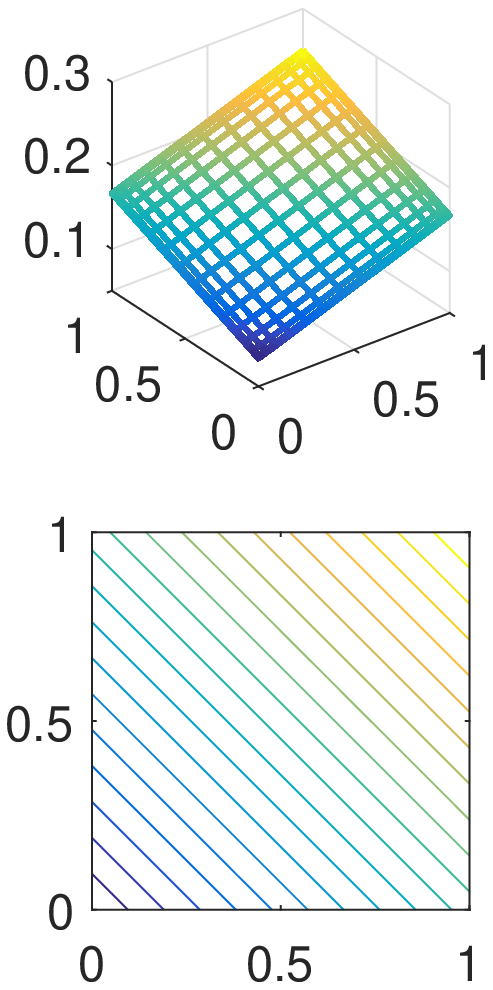}}} \quad
		\subfloat[$L = I$ \label{fig:planeb}]{\scalebox{.53}{\includegraphics{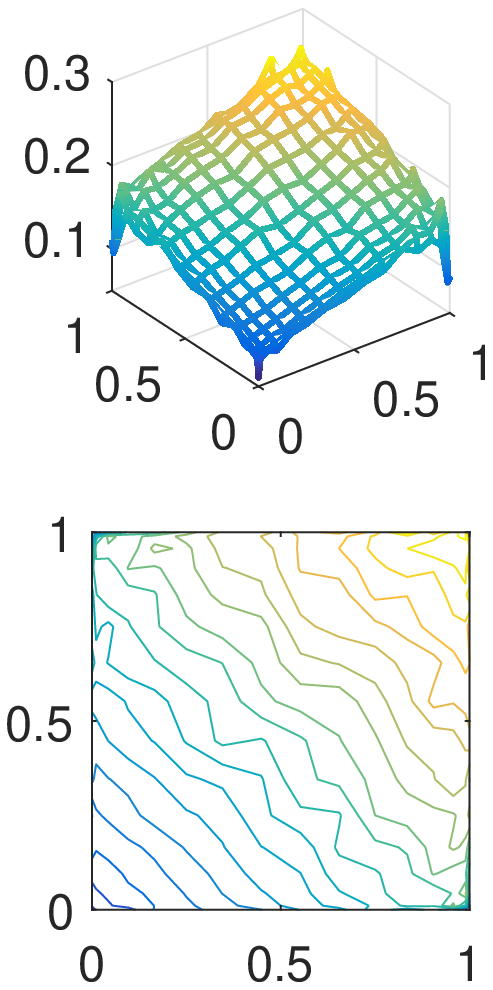}}} \quad
		\subfloat[$L = \tilde{\mathcal{L}}_1$ \label{fig:planed} ]{\scalebox{.53}{\includegraphics{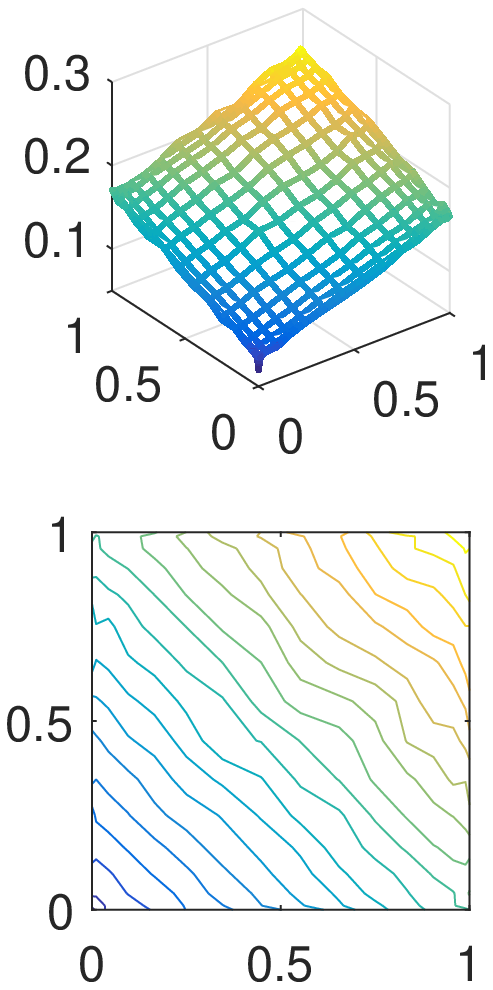}}} \quad
		\subfloat[$L = \tilde{\mathcal{L}}_2$ \label{fig:planee} ]{\scalebox{.53}{\includegraphics{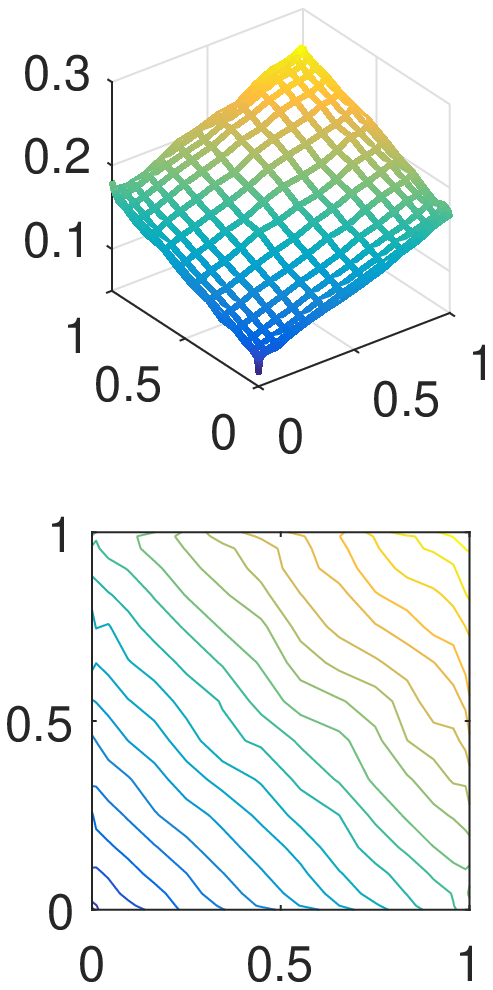}}}
		\label{fig:plane_iso}
	\end{figure}

	\subsubsection{Example 2: orthotropic conductivity}
	
	This example is taken from  the work of Cao, Lesnic and Cola\c co \cite{Cao2018}, adapted to model (\ref{eq:heat})--(\ref{eq:Initial}), defined in $\Omega = (0,1) \times (0,1)$ and with final time $t_f = 1$. In this case the exact solution of
	the model is available and defined as 
	\begin{equation*}
		u(x,y,t) = e^{-t} (\sin (\pi x) \sin (\pi y) + (\pi +1) (x+y) + 1), 
	\end{equation*}
	and the other model parameters are given as 
	$h_i = 1, \ i = 1, \dots, 4$,  $C(x,y) = 1$,  $q(x,y) = 0$,
	\begin{align*}
		f_1(y,t) &= -\frac{1+y}{12} e^{-t}(\pi \sin (\pi y) + \pi +1) + e^{-t} ((\pi +1)y +1),\\
		f_2(y,t) &= \frac{2+y}{12} e^{-t}(-\pi \sin (\pi y) + \pi +1) \\
		&\quad + e^{-t} (-\sin (\pi y) + (\pi+1) (1+y) +1 ),\\
		f_3(x,t) &= -\frac{1+0.5x}{12} e^{-t}(\pi \sin (\pi x) + \pi +1) + e^{-t} ((\pi +1)x +1),\\
		f_4(x,t) &= \frac{2+0.5x}{12} e^{-t}(-\pi \sin (\pi x) + \pi +1) \\
		&\quad + e^{-t} (-\sin (\pi x) + (\pi+1) (1+x) +1 ),
	\end{align*}
	source term
	\begin{align*}
		g(x,y,t) &= -e^{-t} (\sin (\pi x) \sin (\pi y) + (\pi +1) (x+y) + 1)\\
		&\quad -\frac{e^{-t}}{12}[ 2\pi +2+\pi \sin (\pi(x+y)) ]\\
		&\quad +\frac{\pi^2 e^{-t}}{12} (2+1.5x+2y) \sin (\pi x)\sin (\pi y)
	\end{align*}
	and, orthotropic conductivities
	\begin{equation}\label{k_cao}
		k_{11} (x,y) = \frac{1+x+y}{12} \quad \text{and} \quad k_{22} (x,y) = \frac{1+0.5x+y}{12}.
	\end{equation} 
	
	For this test problem, the iterates start with 
	$k^0_{11}(x,y) = k^0_{22}(x,y) = 1/4$ and the reconstruction problem is addressed
	{for $\textup{NL} = 0$, $\textup{NL}= 0.001$ and $\textup{NL}=0.01$.} 
	As we are dealing with an orthotropic conductivity, the scaling matrices come  in the form
	\begin{equation*}
		L = I_2 \otimes \tilde{\mathcal{L}}_i, \quad i = 1,2,
	\end{equation*}
	to ensure the smoothing effect of $\tilde{\mathcal{L}}_i$  on both  $k_{11}$ and $k_{22}$ \cite{BBazanLuchesi}. 
	
	Average results are displayed in Table \ref{table:cao_1over4}. 
	From the table it is apparent that LMM does not produce good quality results with respect to relative errors to the exact solution, even in the case of noiseless data, which contrasts with the reconstructions obtained by the proposed method, with relative errors varying between 2\% and 11\% for $\textup{NL}=0.001$, and between 3\% and 20\% for $\textup{NL}=0.01$, which is very reasonable mainly for we deal with the numerical treatment of a nonlinear ill-posed problem from noisy data. Here it is worth noting that despite the quality of the reconstructions being very different, 	the same does not happen with the temperature reconstruction errors (TRE), which are small and close to each other for both methods.  This is nothing more than the evidence that we are dealing with an ill-posed problem. In the noiseless case this also indicates the sum-of-squares function has more than one global minimizer ($X^*$ is not a singleton).

	\begin{table}[h]
		\centering
		\caption{Average results found for three noise levels. }
		{\small
			\begin{tabular}{lllllll}
				\toprule
				NL & {\bf Method} & $L$ & ${\rm RE}({\bf k}_{11}^{(j)})$ & ${\rm RE}({\bf k}_{22}^{(j)})$ & TRE & MI \\ \midrule
				\multirow{3}{*}{0} & \textbf{LMM} & $I$  &  0.2937  &  0.3698  &  0.0000  &  13 \\
				& \textbf{LMMSS} & $I_2 \otimes \tilde{\mathcal{L}}_1$  &  0.0195  &  0.0154  &  0.0000  &  6 \\
				& \textbf{LMMSS} & $I_2 \otimes \tilde{\mathcal{L}}_2$  &  0.0291  &  0.0127  &  0.0000  &   8 \\ \midrule
				\multirow{3}{*}{0.001} & \textbf{LMM} & $I$ &  0.3996  &  0.5211  &  0.0063  &   4 \\
				& \textbf{LMMSS} & $I_2 \otimes \tilde{\mathcal{L}}_1$  &  0.0218  &  0.0185  &  0.0003  &  3 \\
				& \textbf{LMMSS} & $I_2 \otimes \tilde{\mathcal{L}}_2$  &  0.0611  &  0.1138  &  0.0100  &   2 \\ \midrule
				\multirow{3}{*}{0.01} & \textbf{LMM} & $I$ &  0.5100  &  0.6851  &  0.0398  &   1 \\
				& \textbf{LMMSS} & $I_2 \otimes \tilde{\mathcal{L}}_1$  &  0.0388  &  0.0318  &  0.0022  &  2 \\
				& \textbf{LMMSS} & $I_2 \otimes \tilde{\mathcal{L}}_2$ &   0.1446  &  0.2024  &  0.0237  &   1 \\
				\bottomrule
		\end{tabular}}
		\label{table:cao_1over4}
	\end{table}
	
	Figure \ref{fig:cao_plot_line_IDL2} presents the average results obtained for $k_{11}$ for different $y$ values and $\textup{NL} = 0.001$. Once more it can be seen that while LMMSS produces solutions of good quality,  LMM brings
	solutions with  high oscillations and reconstructions far from the desired. Similar results were found for $k_{22}$. Here it is worth noting that among the scaling matrices tested in this simulation, $L = I_2 \otimes \tilde{\mathcal{L}}_1$  was the one that produced the best results. We believe that this is due to the fact that as the solution is linear, a second  order derivative operator would tend to diminish the importance of the term $\|L{\bf k}\|$ in the minimization process.

	\begin{figure}[t]
		\centering
		\caption{Average results for $k_{11} (x,y)$ for some fixed $y$ values and $\textup{NL} = 0.001$. In the legends, $L_1$ and $L_2$ represent, respectively, $I_2 \otimes \tilde{\mathcal{L}}_1$ and $I_2 \otimes \tilde{\mathcal{L}}_2$.}
		\includegraphics[scale=0.4]{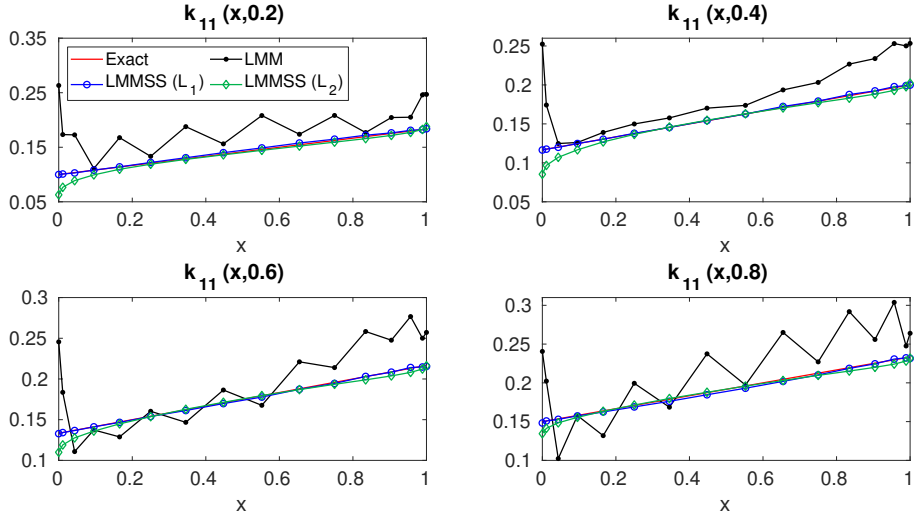}
		\label{fig:cao_plot_line_IDL2}
	\end{figure}
	

\section{Conclusion}\label{sec:final}
Motivated by the improved performance of the Tikhonov regularization method in general form when  solving discrete ill-posed problems with smooth solution~\cite{Hansen1998},
we proposed a version of the Levenberg-Marquardt method with singular scaling matrix
which we named LMMSS.
Under a  completeness condition often used in the analysis of linear ill-posed problems we showed that the LMMSS iterates are well defined and  establish that
it has local quadratic convergence under an error bound assumption for the zero residue case. 
We also prove that the search directions $d_k$ are gradient-related and with that
we ensure that  the limit points of the sequence generated by LMMSS are stationary points of the sum-of-squares function. 

As potential applications, we study two parameter identification problems involving a 2D heat conduction models. 
Since in these applications the parameter to be identified is supposed to be smooth, in LMMSS we introduce singular scaling matrices chosen as discrete derivative operators aiming to promote smoothness in the reconstructed parameter. 
To mitigate the effect of inaccuracies in the data, the discrepancy principle is used as stopping criterion. 
Numerical results using synthetic data illustrate that LMMSS can be useful in
practical applications, mainly due to the high quality of the reconstructions and the low operational cost. 
However, continued experience with LMMSS in other applications is necessary to fully  assess its potential.

For now, our convergence analysis only covers the case where $L$ is fixed through the iterations. 
This is in line with some applications, in which the proposal is to introduce desired properties present (or expected) in the exact solution. 
In such scenario, matrix $L$ does not need to change in each iteration. 
Nevertheless, it should be interesting in other applications to consider an iteration-dependent $L_k$. 
The convergence analysis in this case shall be subject of a future investigation.

Another future objective is to address the local convergence of LMM with singular scaling in the context of nonzero-residue problems by extending the results in \cite{Behling2019}.

Besides, in order to consider large-scale problems, strategies for exploiting sparsity and special structure for solving the LMM linear system as well as inexact solution of the LMM subproblem will also be considered in future works.

%
%
%

\section*{Acknowledgments}

This work was partially supported by Brazilian agencies CAPES (Coordenação de Aperfeiçoamento de Pessoal de Nível Superior),  FAPESC (Fundação de Amparo à Pesquisa e Inovação do Estado de Santa Catarina), grant number 88887.178114/2018-00 and CNPq (Conselho Nacional de Desenvolvimento Científico e Tecnológico), grant 305213/2021-0.

%
%
%
%
%
%
%
%
\appendix 

\section{Some technical proofs}\label{app:proofs}

Proof of Lemma~\ref{lemma_psi}.

\begin{proof}
	First, observe that, for any fixed $\lambda$, we have 
	\begin{equation}\label{psi_lim_dir}
	\lim_{ \gamma \rightarrow \infty} \psi (\gamma, \lambda) = 1.
	\end{equation}
	Now, given $\lambda > 0$, observe that 
	\begin{equation}\label{deriv_psi}
	\frac{ \partial \psi}{ \partial \gamma} (\gamma, \lambda) = \frac{ (2\lambda - 1) \gamma^2  + \lambda }{ \sqrt{1+\gamma^2}(\gamma^2+\lambda)^2 }.
	\end{equation}
	\begin{itemize}
		\item[(a)] Consider a fixed $\lambda \in (0,1/2)$. Since the maximizers of $\psi(\gamma,\lambda)$, for $\gamma > 0$ must satisfy $\frac{ \partial \psi}{ \partial \gamma} (\gamma, \lambda) = 0$, from \eqref{deriv_psi}, we have 
		\begin{equation*}
		(2\lambda - 1) \gamma^2  + \lambda = 0 \quad \Rightarrow \quad \gamma_{\max} = \sqrt{ -\frac{ \lambda}{ 2\lambda-1}},
		\end{equation*}
		Replacing $\gamma_{\max}$ at $\psi(\gamma,\lambda)$, we obtain 
		\begin{equation*}
		\psi(\gamma_{\max},\lambda) = \frac{1}{ 2 \sqrt{ \lambda - \lambda^2 }}.
		\end{equation*}
		Furthermore, since $\frac{ \partial \psi}{ \partial \gamma} (\gamma, \lambda) <0$, for $\gamma \in (\gamma_{\max},+\infty)$ and $\frac{ \partial \psi}{ \partial \gamma} (\gamma, \lambda) > 0$, for $\gamma \in (0,\gamma_{\max})$, with $\psi(0,\lambda) = 0$, we conclude that $\gamma_{\max}$ is the unique maximizer.
		\item[(b)] Now, for a fixed $\lambda \in [1/2,+\infty)$, in view of \eqref{deriv_psi}, it follows that $\frac{ \partial \psi}{ \partial \gamma} (\gamma, \lambda) > 0$ for $\gamma \geq 0$. Hence, $\psi$ is increasing for $\gamma \in [0, +\infty)$, with $\psi(0,\lambda)=0$. Then, from \eqref{psi_lim_dir}, we conclude that $\psi (\gamma, \lambda) \leq 1$, for all $\gamma \geq 0$.
	\end{itemize}
\end{proof}






\bibliographystyle{plain}
\bibliography{refs}


\end{document}